\titleformat{\section}{\normalfont\scshape\centering}{\thesection}{1em}{}
\titleformat{\subsection}{\bfseries}{\thesubsection}{1em}{}
\newtheorem{theorem}{Theorem}[section]
\newtheorem{lemma}[theorem]{Lemma}
\theoremstyle{definition}
\newtheorem{remark}[theorem]{Remark}
\numberwithin{equation}{section}
\newcommand{\Gal}{\textup{Gal}}
\address{Department of Mathematics and Statistics, P.O. Box 68, 00014 Helsinki, Finland}
\email{olli.jarviniemi@helsinki.fi}
\title{On the Common Prime Divisors of Polynomials}
\date{}
\author{Olli Järviniemi}
\begin{document}
\begin{abstract}
The prime divisors of a polynomial $P$ with integer coefficients are those primes $p$ for which $P(x) \equiv 0 \pmod{p}$ is solvable. Our main result is that the common prime divisors of any several polynomials are exactly the prime divisors of some single polynomial. By combining this result with a theorem of Ax we get that for any system $F$ of multivariate polynomial equations with integer coefficients, the set of primes $p$ for which $F$ is solvable modulo $p$ is the set of prime divisors of some univariate polynomial. In addition, we prove results on the densities of the prime divisors of polynomials. The article serves as a light introduction to algebraic number theory and Galois theory.\end{abstract}

\maketitle

\section{Introduction}
\label{sec:intro}

Let $P$ be a polynomial with integer coefficients. A prime $p$ is said to be a prime divisor of $P$ if $p$ divides $P(n)$ for some integer $n$. What can we say about the prime divisors of $P$?

By an elementary argument one can prove that the set of prime divisors of $P$, denoted by $S(P)$, is infinite for non-constant $P$ (see \cite{MT}, Theorem 2 for a proof). Much more is true. The celebrated Chebotarev density theorem (or the weaker Frobenius density theorem) proves that the density of $S(P)$ with respect to the set of prime numbers is in fact positive (see \cite{SL}). This is discussed in more detail later.

For linear polynomials $P(x) = ax + b$ one easily sees that $p \in S(P)$ for all $p$ except for those which divide $a$ and not $b$. For polynomials $P$ of degree two there also exists an explicit description of the set $S(P)$, which can be proven by applying theory of quadratic residues. For example, the set $S(x^2 + 1)$ consists of all primes $p$ congruent to $1 \pmod{4}$, and in addition the prime $p = 2$. In the general case of an arbitrary $P$ no explicit description of the set $S(P)$ is known. The characterization of $S(P)$ by congruence conditions has been considered in \cite{Suresh}. See \cite{Dalawat} for discussion on the general case.

One may then ask about the characterization of other similarly defined sets. For example, what does $S(A) \cap S(B)$ look like for polynomials $A$ and $B$? It has been proved by Nagell that $S(A) \cap S(B)$ is infinite for any non-constant $A$ and $B$ (\cite{MT}, Theorem 3), and the proof generalizes for an arbitrary number of polynomials.\footnote{See \cite{Nagell} for the original proof by Nagell.} In (\cite{GB}, Theorem 10) it has been proved that for many types of $A$ and $B$ there exists such a $D \in \mathbb{Z}[x]$ that $S(A) \cap S(B) = S(D)$. Our main result is that in fact for any $A$ and $B$ there exists such a $D$.

Note that the set $S(A) \cap S(B)$ is the set of those primes $p$ for which the system
\[
\begin{cases}
A(x) \equiv 0 \pmod{p} \\
B(y) \equiv 0 \pmod{p}
\end{cases}
\]
of congruences has an integer solution. In general, for multivariate polynomials $F_1, F_2, \ldots , F_m$ in the variables $x_1, \ldots , x_n$ one can consider the set of primes $p$ for which the system
\[
\begin{cases}
F_1(x_1, \ldots , x_n) \equiv 0 \pmod{p} \\
F_2(x_1, \ldots , x_n) \equiv 0 \pmod{p} \\
\vdots \\
F_m(x_1, \ldots , x_n) \equiv 0 \pmod{p} \\
\end{cases}
\]
has an integer solution. By a combination of our main result above and a result of Ax (\cite{Ax}, Theorem 1) we prove that the set of these $p$ is again merely the set $S(D)$ for some polynomial $D$ (in one variable) with integer coefficients.

The structure of the article is as follows. We first define a bit of notation below. We then state our results. We give an intuitive explanation on why the main theorem is true after presenting the necessary preliminaries. We then prove the main theorem. Finally, some density problems are considered, starting by introducing the Frobenius density theorem.

By $\mathbb{Z}[x]$ we denote the set of polynomials with integer coefficients, and $\mathbb{Z}_c[x]$ denotes the non-constant polynomials of $\mathbb{Z}[x]$. In general, $K[x]$ denotes the set of polynomials with coefficients from $K$. The letter $p$ will always denote a prime number, and the letters $n, m, k$ refer to integers.

The (natural) density $\delta(S)$ of a set $S$ consisting of primes is defined as the limit
$$\lim_{x \to \infty} \frac{|\{p : p \le x, p \in S\}|}{|\{p : p \le x, p \text { is prime}\}|}.$$
This limit does not necessarily exist. However, for the sets we consider the limit always exists, as we will see from the Frobenius density theorem.

For further reading on related topics we suggest \cite{Ballot}, \cite{Hooley}, \cite{Lagarias} and \cite{Morton}.

\section{Results}
\label{sec:results}

As mentioned above, the main result concerns the common prime divisors of several polynomials.

\begin{theorem}
\label{thm:intersection}
Let $n$ be a positive integer, and let $P_1, P_2, \ldots , P_n \in \mathbb{Z}[x]$ be arbitrary. There exists a polynomial $D \in \mathbb{Z}[x]$ such that
$$S(D) = S(P_1) \cap S(P_2) \cap \ldots \cap S(P_n).$$
\end{theorem}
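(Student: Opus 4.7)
The plan is to pass to the Galois group of a common splitting field and reinterpret the question as a statement about fixed points of group elements acting on sets of roots. Let $L$ be a Galois extension of $\mathbb{Q}$ containing the roots of each $P_i$ (for instance, the compositum of their splitting fields), and set $G := \Gal(L/\mathbb{Q})$. The key standard fact I would invoke is that for all but the finitely many primes ramified in $L$, the Frobenius $\textup{Frob}_p$ is a well-defined conjugacy class in $G$, and $p \in S(P_i)$ if and only if some element of $\textup{Frob}_p$ fixes some root of $P_i$ in $L$; this follows from the correspondence between the factorization of $P_i$ modulo $p$ and the cycle structure of $\textup{Frob}_p$ on the root set.

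Writing $R_i \subseteq L$ for the roots of $P_i$, I would consider the product $G$-set $Y := R_1 \times \cdots \times R_n$ equipped with the diagonal $G$-action. An element $\sigma \in G$ has a fixed point in $Y$ if and only if it has a fixed point in each $R_i$ simultaneously, so for unramified $p$ one has $p \in \bigcap_i S(P_i)$ iff $\textup{Frob}_p$ has a fixed point in $Y$. Decompose $Y = Y_1 \sqcup \cdots \sqcup Y_r$ into $G$-orbits, with $Y_\ell \cong G/K_\ell$ where $K_\ell \le G$ is the stabilizer of a chosen point of $Y_\ell$. By the primitive element theorem, pick $\alpha_\ell$ generating the fixed field $L^{K_\ell}$ over $\mathbb{Q}$, and let $D_\ell \in \mathbb{Z}[x]$ be an integer scaling of its minimal polynomial. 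Then the roots of $D_\ell$ in $L$ form a $G$-set isomorphic to $G/K_\ell \cong Y_\ell$, so (for unramified $p$) $p \in S(D_\ell)$ iff $\textup{Frob}_p$ fixes a point of $Y_\ell$. Taking $D_0 := \prod_\ell D_\ell$ yields $S(D_0) = \bigcap_i S(P_i)$ up to a finite exceptional set of primes.

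It remains to adjust $D$ to agree on the finite set of exceptional primes. To add a prime $p_0$ to $S(D)$, replace $D$ by $p_0 D$. To remove $p_0$, first divide $D$ by the highest power of $p_0$ dividing all of its coefficients (so that the reduction $D \bmod p_0$ is a nonzero polynomial), and then replace $D(x)$ by $D(p_0 x + a)$ for some $a \in \mathbb{Z}$ with $p_0 \nmid D(a)$, which exists because the nonzero polynomial $D \bmod p_0 \in \mathbb{F}_{p_0}[x]$ has at most $\deg D$ roots. Each such operation leaves $S(D)$ unchanged at every prime distinct from $p_0$, so stacking finitely many corrections produces a polynomial $D$ with $S(D) = \bigcap_i S(P_i)$ exactly.

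The central conceptual move is the product-and-orbit-decomposition: intersecting ``has a fixed point'' conditions on several $G$-sets is precisely the ``has a fixed point'' condition on the product $G$-set, and each transitive orbit $Y_\ell$ is realized by an irreducible polynomial via the primitive element theorem. Once this Galois-theoretic reformulation is in place, everything else is routine; the finite-prime cleanup at the end is the only technical (but standard) step needed to upgrade equality up to finitely many primes to exact equality.
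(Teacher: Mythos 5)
Your proof takes a genuinely different, higher-level route than the paper's. Where the paper works bottom-up --- constructing an explicit ring homomorphism $\mathbb{Z}^p(\alpha)\to\mathbb{F}_p$ in Lemma \ref{lem:homo}, factoring $B$ over $\mathbb{Q}(\alpha)$, and chasing coefficients --- you package the same combinatorics into the Dedekind--Frobenius correspondence and pure $G$-set language: $p\in\bigcap_i S(P_i)$ becomes ``$\textup{Frob}_p$ fixes a point of the product $G$-set $Y=R_1\times\cdots\times R_n$,'' and the orbit decomposition of $Y$ together with the Galois correspondence hands you the factors $D_\ell$. This is the same underlying structure the paper exploits (its $D_i$ are minimal polynomials of primitive elements $\alpha+n_i\beta_i$, i.e.\ of generators of the fixed fields of point-stabilizers in $R_1\times R_2$), but your version makes transparent \emph{why} the product of the $D_\ell$ works, and it handles all $n$ at once instead of inducting from the case $n=2$. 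The trade-off is that you invoke the Frobenius machinery for the unramified primes, whereas the paper's Lemmas \ref{lem:homoHelp}--\ref{lem:DsubAB} are elementary and self-contained; the paper's explicit construction also hands over the degree formula $\deg D=\prod_i\deg P_i$ used in Theorem \ref{thm:bound}, though your orbit count $|Y|=\prod_i|R_i|$ gives the same.

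There is one genuine gap, in the finite-prime cleanup. To remove a prime $p_0$ from $S(D)$ you strip the $p_0$-content so that $\bar D:=D\bmod p_0\in\mathbb{F}_{p_0}[x]$ is nonzero, then claim an $a$ with $p_0\nmid D(a)$ exists ``because $\bar D$ has at most $\deg D$ roots.'' But $\mathbb{F}_{p_0}$ has only $p_0$ elements, so once $\deg D\ge p_0$ this root count does not preclude $\bar D$ from vanishing identically as a function on $\mathbb{F}_{p_0}$: any $D$ with $x^{p_0}-x\mid\bar D$ (for instance $D(x)=x^{p_0}-x+p_0$) admits no such $a$. Since your $D$ has degree $\prod_i\deg P_i$, which will typically dwarf the small exceptional primes like $2$ and $3$, this is not a corner case. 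The repair is exactly the paper's Lemma \ref{lem:fix}: if $D(0)\neq 0$ and $p_0^k$ is the largest power of $p_0$ dividing $D(0)$, set $D_-(x)=D(p_0^{k+1}x)/p_0^k$. Then $D_-\in\mathbb{Z}[x]$, its constant term $D(0)/p_0^k$ is prime to $p_0$, every non-constant coefficient is divisible by $p_0$, so $p_0\notin S(D_-)$, and $S(D_-)$ agrees with $S(D)$ at all other primes since $p_0^{k+1}$ is invertible modulo each of them. With this patch your proof is complete.
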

The proof of the theorem is constructive. Furthermore, the constructed $D$ has degree $T = \deg(P_1) \cdots \deg(P_n)$, which in some cases is the minimal possible. 


The next theorem is the generalization for systems of polynomial congruences.

\begin{theorem}
\label{thm:system}
Let $n$ and $m$ be positive integers. Let $F_1, F_2, \ldots , F_m$ be polynomials in the variables $x_1, \ldots , x_n$ with integer coefficients. There exists a polynomial $D \in \mathbb{Z}[x]$ such that the system
\[
\begin{cases}
F_1(x_1, \ldots , x_n) \equiv 0 \pmod{p} \\
F_2(x_1, \ldots , x_n) \equiv 0 \pmod{p} \\
\vdots \\
F_m(x_1, \ldots , x_n) \equiv 0 \pmod{p} \\
\end{cases}
\]
of congruences has a solution if and only if $p \in S(D)$.
\end{theorem}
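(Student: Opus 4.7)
The plan is to deduce Theorem~\ref{thm:system} from Theorem~\ref{thm:intersection} by invoking Ax's theorem. In broad strokes, Ax's result translates the multivariate solvability-mod-$p$ condition into an intersection of univariate solvability-mod-$p$ conditions, and Theorem~\ref{thm:intersection} then collapses this intersection into a single polynomial.

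First, I would apply the theorem of Ax (\cite{Ax}, Theorem~1) to the system $F_1, \ldots, F_m$. From it I would extract univariate polynomials $Q_1, \ldots, Q_k \in \mathbb{Z}[x]$ and a finite exceptional set $E$ of primes with the property that for every prime $p \notin E$, the multivariate system is solvable modulo $p$ if and only if $p \in S(Q_1) \cap \cdots \cap S(Q_k)$. The underlying reason is that Ax's theorem characterizes the set of primes of mod-$p$ solvability in terms of the behaviour of Frobenius at $p$ in a suitable finite Galois extension $L/\mathbb{Q}$, and the resulting conjugation-invariant subset of $\Gal(L/\mathbb{Q})$ can be realized as an intersection of conditions of the form ``Frobenius has a fixed point on the roots of $Q_i$'', which is precisely the condition $p \in S(Q_i)$ outside finitely many primes of bad reduction.

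Next I would invoke Theorem~\ref{thm:intersection} applied to $Q_1, \ldots, Q_k$ to obtain $D_0 \in \mathbb{Z}[x]$ with $S(D_0) = S(Q_1) \cap \cdots \cap S(Q_k)$. Writing $T$ for the target set of primes, the previous two steps ensure that $S(D_0)$ and $T$ agree outside $E$, so it remains to correct $D_0$ on the finite set $E$. To adjoin a prime $q \in T \setminus S(D_0)$, replace $D_0$ by $q \cdot D_0$: since every value $q D_0(n)$ is divisible by $q$, this gives $S(q D_0) = S(D_0) \cup \{q\}$. To excise a prime $p_0 \in S(D_0) \setminus T$, introduce the auxiliary polynomial $L_{p_0}(x) = p_0 x + 1$; since $L_{p_0} \equiv 1 \pmod{p_0}$ and for any prime $p \ne p_0$ the element $-p_0^{-1} \pmod{p}$ is a root of $L_{p_0}$, we have $S(L_{p_0}) = \{p \text{ prime}\} \setminus \{p_0\}$. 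Applying Theorem~\ref{thm:intersection} to $D_0$ and $L_{p_0}$ then yields a polynomial with $S$-set $S(D_0) \setminus \{p_0\}$. Iterating these two operations over the finitely many primes of $E$ produces the desired $D$.

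The principal obstacle lies in making the first step precise: one must argue that the conjugation-invariant subset of the Galois group associated by Ax to mod-$p$ solvability of the system is indeed realizable, outside a finite set of primes, as an intersection of fixed-point-set conditions for finitely many permutation representations --- equivalently, as an intersection of conditions of the form $p \in S(Q_i)$. Once this reduction is in place, the remainder of the argument is formal: Theorem~\ref{thm:intersection} collapses the intersection, and the surgery operations introduced above dispense with the remaining finite discrepancy.
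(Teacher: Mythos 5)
Your overall plan coincides with the paper's: apply Ax's theorem, reduce the multivariate solvability set to an intersection $S(Q_1)\cap\cdots\cap S(Q_k)$ of prime-divisor sets of univariate polynomials, then collapse the intersection via Theorem~\ref{thm:intersection}, and finally repair a finite exceptional set (your two surgery operations are correct, and are essentially the paper's Lemma~\ref{lem:fix}). The difference is how the first reduction is justified, and this is where there is a genuine gap. As quoted in the paper, Ax's theorem gives only that the solvability set is a \emph{finite boolean combination} --- unions, intersections, \emph{and complements} --- of sets $S(P)$; it does not directly yield a plain intersection up to finitely many primes. The passage from ``boolean combination'' to ``intersection'' is exactly the content of a theorem of van den Dries (\cite{Dries}), which the paper invokes for this step. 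Your heuristic that the conjugation-invariant subset of the relevant Galois group ``can be realized as an intersection of fixed-point conditions'' is not a formal consequence of Ax's statement: fixed-point sets of permutation actions are special (for instance they are closed under taking powers of elements), so one must actually show the subset produced by Ax has this structure; that is what Dries proves. You flag this as ``the principal obstacle,'' which is honest, but a complete proof needs either a citation of \cite{Dries} or a genuine argument there. (The paper's footnote speculates that the obstacle could instead be bypassed by inspecting Ax's proof and using Lemma~\ref{lem:fix} and Lemma~\ref{lem:union}, which is closer in spirit to your sketch, but even the paper treats this only as a remark and relies on \cite{Dries} for the rigorous argument.)
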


The next results concern the densities $\delta(S(P))$. The first one proves a lower bound for the density of the common prime divisors of several polynomials.
\begin{theorem}
\label{thm:bound}
Let $P_1, \ldots , P_n \in \mathbb{Z}_c[x]$ be given. We have
$$\delta\left(S(P_1) \cap \ldots \cap S(P_n)\right) \ge \frac{1}{\deg(P_1) \cdots \deg(P_n)}.$$
\end{theorem}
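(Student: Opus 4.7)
The plan is to reduce the density question to a counting problem inside a single Galois group---that of the joint splitting field of all the $P_i$---and then to finish with a one-line orbit--stabilizer argument, using the Frobenius density theorem as the bridge.

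First I would take $L$ to be the splitting field of $P_1 P_2 \cdots P_n$ over $\mathbb{Q}$ and set $G = \Gal(L/\mathbb{Q})$. For each $i$, let $R_i \subset L$ be the set of distinct roots of $P_i$, so $|R_i| \le \deg(P_i)$ and $G$ acts on $R_i$; hence $G$ acts diagonally on $X = R_1 \times \cdots \times R_n$, which has size $|X| \le T$. The key dictionary (central to Frobenius density) is that for all but finitely many primes $p$---those unramified in $L$---one has $p \in S(P_i)$ if and only if the Frobenius conjugacy class at $p$ contains an element with a fixed point on $R_i$. Intersecting over $i$, $p \in \bigcap_i S(P_i)$ if and only if $\textup{Frob}_p$ has a fixed point on $X$ in the diagonal action.

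Let $U \subseteq G$ denote the set of $g$ with at least one fixed point on $X$. Since having a fixed point is a conjugation-invariant property, $U$ is a union of conjugacy classes, so the Frobenius density theorem will yield
$$\delta\Bigl(\bigcap_{i=1}^n S(P_i)\Bigr) = \frac{|U|}{|G|}.$$
To bound this from below, I would fix any tuple $x \in X$ and consider its stabilizer $G_x$. Orbit--stabilizer gives $|G_x| = |G|/|G \cdot x| \ge |G|/|X| \ge |G|/T$, and $G_x \subseteq U$ by construction, so $|U|/|G| \ge 1/T$.

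The only step beyond this short skeleton is the Frobenius--factorization dictionary linking $S(P_i)$ to the fixed points of $\textup{Frob}_p$ on $R_i$. That machinery is not yet available at this point in the paper, so the substantive part of the write-up should reside in (or cite) the section developing the Frobenius density theorem; once that is in place, the orbit--stabilizer bound above closes the proof in one line. Notably, this argument bypasses Theorem~\ref{thm:intersection} entirely: we never need the polynomial $D$ to be exhibited, only the abstract action of $G$ on the product $X$.
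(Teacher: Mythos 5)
Your argument is correct and takes a genuinely different route from the paper's. The paper deduces the bound from Theorem~\ref{thm:intersection}: it replaces $S(P_1) \cap \cdots \cap S(P_n)$ by $S(D)$ for the explicitly constructed $D$, checks that $\deg D = \deg(P_1)\cdots\deg(P_n)$, and then invokes the single-polynomial bound $\delta(S(P)) \ge 1/\deg(P)$ of \cite{BL}. You bypass the construction of $D$ entirely: you work directly in $G = \Gal(L/\mathbb{Q})$ acting diagonally on $X = R_1 \times \cdots \times R_n$, and close with orbit–stabilizer. This is cleaner and more conceptual, and it also makes the $n=1$ case of \cite{BL} transparent rather than a black box. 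On the other hand, the paper's route comes essentially for free once Theorem~\ref{thm:intersection} and its degree computation are in hand, and it emphasizes the optimality of that construction.

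One point to tighten: the identity $\delta\bigl(\bigcap_i S(P_i)\bigr) = |U|/|G|$ does not follow from the Frobenius density theorem \emph{in the cycle-type form stated in this paper}, because $U$ need not be a union of cycle-type classes for the permutation action of $G$ on the roots of $P_1\cdots P_n$ (e.g.\ in $S_4 \times S_4$ the elements $((12)(34),e)$ and $((12),(12))$ have the same cycle type on the eight roots, but only the second has a fixed point on each factor). What saves the argument is that $U$ is closed under taking powers $g \mapsto g^k$ with $\gcd(k,\ord g)=1$ — since $g$ and $g^k$ fix the same elements of $X$ — and is conjugation-invariant, so $U$ is a union of \emph{divisions}. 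The Frobenius density theorem in its original (division) form, or Chebotarev, then gives the density $|U|/|G|$. So the argument is sound, but the citation should be to that stronger statement rather than the cycle-type corollary the paper records.
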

We also give a family of non-trivial examples of equality cases.

We then prove that one can under additional assumptions calculate the density of $S(P_1) \cap \ldots \cap S(P_n)$. These assumptions concern the splitting fields of $P_i$ and their degrees, which are defined later on.

\begin{theorem}
\label{thm:product}
Let $P_1, \ldots , P_n \in \mathbb{Z}_c[x]$ be given. Let $F_i$ be the splitting field of $P_i$, and let $F$ be the compositum of the fields $F_i$. Assume that we have
$$[F : \mathbb{Q}] = [F_1 : \mathbb{Q}] \cdots [F_n : \mathbb{Q}].$$
Then,
$$\delta\left(S(P_1) \cap \ldots \cap S(P_n) \right) = \delta(S(P_1)) \cdots \delta(S(P_n)).$$
\end{theorem}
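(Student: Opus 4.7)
The plan is to reduce everything to a computation in the Galois group of the compositum $F$ and then apply the Frobenius density theorem (which the paper will introduce shortly). The central reformulation I use is that, for all but finitely many primes $p$, one has $p \in S(P_i)$ if and only if the Frobenius class of $p$ in $G_i := \Gal(F_i/\mathbb{Q})$ consists of automorphisms that have at least one fixed point on the set of roots of $P_i$ in $F_i$. Call this subset of $G_i$ (a union of conjugacy classes) $H_i$, so that the Frobenius density theorem gives $\delta(S(P_i)) = |H_i|/|G_i|$.

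Next I will analyze $G := \Gal(F/\mathbb{Q})$. Restriction to each $F_i$ yields a surjective homomorphism $\pi_i : G \to G_i$, and packaging them together gives $\iota := (\pi_1, \ldots, \pi_n) : G \to \prod_{i=1}^n G_i$. The map $\iota$ is injective because any $\sigma \in G$ fixing every $F_i$ pointwise also fixes their compositum $F$. The hypothesis $[F : \mathbb{Q}] = \prod_i [F_i : \mathbb{Q}]$ is equivalent to $|G| = \prod_i |G_i|$, which together with injectivity forces $\iota$ to be an isomorphism. This identification $G \cong \prod_i G_i$ is the crucial algebraic input and encodes the linear-disjointness content of the assumption.

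With this identification in hand, for each prime $p$ unramified in $F$ (all but finitely many), the Frobenius class in $G_i$ is the $\pi_i$-image of the Frobenius class in $G$. Consequently, $p \in S(P_1) \cap \cdots \cap S(P_n)$ if and only if the Frobenius class of $p$ in $G$ lies inside $\iota^{-1}(H_1 \times \cdots \times H_n)$. Since each $H_i$ is a union of conjugacy classes in $G_i$, this preimage is a union of conjugacy classes in $G$, so the Frobenius density theorem applied to $F/\mathbb{Q}$ will yield
\[
\delta\!\left(S(P_1) \cap \cdots \cap S(P_n)\right) = \frac{|H_1 \times \cdots \times H_n|}{|G|} = \prod_{i=1}^n \frac{|H_i|}{|G_i|} = \prod_{i=1}^n \delta(S(P_i)).
\]
No single step looks delicate: the ``hard'' piece is the group-theoretic identification $G \cong \prod G_i$, which is simply the content of the hypothesis, and the rest is routine bookkeeping with Frobenius classes (together with the observation that the finitely many ramified primes contribute density zero).
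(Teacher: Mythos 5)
Your proposal is essentially the paper's own proof: both identify the restriction map $G \to G_1 \times \cdots \times G_n$ as an isomorphism (injectivity from Lang plus a cardinality count via the hypothesis), and both then apply the Frobenius density theorem to the compositum $F$, noting that the set of $\sigma \in G$ fixing at least one root of each $P_i$ corresponds under this isomorphism to $H_1 \times \cdots \times H_n$. You spell out the Frobenius-class bookkeeping a bit more explicitly than the paper, which phrases the final step in terms of the cycle-type version of the theorem, but the argument is the same.
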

This density is what one would expect, and corresponds to the definition of the independence of random variables.

Finally, we prove that all rational numbers between $0$ and $1$ occur as densities. The converse direction has already been solved by the Frobenius density theorem.

\begin{theorem}
\label{thm:surjective}
Let $r$ be a rational number between $0$ and $1$. There exists a polynomial $P \in \mathbb{Z}[x]$ such that
$$\delta(S(P)) = r.$$
\end{theorem}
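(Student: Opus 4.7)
The plan is to realize $r = a/b$ as the density of a product $P = P_1 P_2 \cdots P_k$, where each $P_i$ has density exactly $1/n_i$ for some integer $n_i \geq 2$, and the splitting fields of the $P_i$ are jointly linearly disjoint over $\mathbb{Q}$. Under that disjointness, the Frobenius class in the compositum factors accordingly, giving $\delta(S(P)) = 1 - \prod_i(1-1/n_i)$; the task thus reduces to (i) constructing each $P_i$ and (ii) expressing the complementary density $(b-a)/b$ as a product $\prod_i (n_i - 1)/n_i$.

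For (i), given $n \geq 2$, I would use Dirichlet's theorem to pick a prime $p \equiv 1 \pmod n$. The cyclic group $(\mathbb{Z}/p)^*$ has a unique subgroup $H$ of index $n$, and its fixed field $K_n \subseteq \mathbb{Q}(\zeta_p)$ is a cyclic Galois extension of $\mathbb{Q}$ of degree $n$. Let $P_n$ be the minimal polynomial of a primitive element of $K_n$. Then $\Gal(K_n/\mathbb{Q}) \cong \mathbb{Z}/n$ acts regularly on the $n$ roots of $P_n$, so only the identity has a fixed root, and the Frobenius density theorem yields $\delta(S(P_n)) = 1/n$.

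For (ii), I claim that every rational $s \in (0, 1]$ can be written as $s = \prod_{i=1}^{k}(n_i-1)/n_i$ with integers $n_i \geq 2$ (the empty product corresponding to $s = 1$). I would induct strongly on the denominator $d$ of $s = c/d$ in lowest terms: the base $d = 1$ is the empty product, and for $d \geq 2$ we have $c < d$, so $s' = c/(d-1) = s \cdot d/(d-1)$ lies in $(0, 1]$ with denominator at most $d-1$. By induction $s'$ is such a product, and appending the factor $(d-1)/d$ gives the desired expression for $s$.

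Putting everything together: decompose $(b-a)/b = \prod_{i=1}^k (n_i - 1)/n_i$ via (ii), then use Dirichlet again to pick pairwise distinct primes $p_1, \ldots, p_k$ with $p_i \equiv 1 \pmod{n_i}$ and build $P_i$ inside $\mathbb{Q}(\zeta_{p_i})$ as in (i). Since the $p_i$ are distinct primes, $\mathbb{Q}(\zeta_{p_1}), \ldots, \mathbb{Q}(\zeta_{p_k})$ are jointly linearly disjoint over $\mathbb{Q}$, and hence so are the subfields $K_i$; in particular $[K_1 \cdots K_k : \mathbb{Q}] = \prod_i n_i$. Setting $P = P_1 \cdots P_k$, iterated use of Theorem \ref{thm:product} together with inclusion--exclusion (or, equivalently, a direct Chebotarev computation on $G_1 \times \cdots \times G_k$) yields
\[
\delta(S(P)) = 1 - \prod_{i=1}^{k}\left(1 - \frac{1}{n_i}\right) = 1 - \frac{b-a}{b} = \frac{a}{b}.
\]
The only step requiring real care is the decomposition lemma (ii), but its induction is elementary; the Galois-theoretic ingredients (Dirichlet, cyclotomic subfields, and Theorem \ref{thm:product}) are standard.
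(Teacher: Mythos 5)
Your proof is correct, and while it travels through the same landmarks as the paper — cyclotomic subfields for $1/n$-densities, linear disjointness of $\mathbb{Q}(\zeta_{p_i})$ for distinct primes, and Theorem~\ref{thm:product} — it is organized differently enough to merit a comparison. The paper proves Theorem~\ref{thm:surjective} by a single induction on the denominator: it invokes Schur's theorem (Lemma~\ref{lem:subgroup}, via Murty--Thain) to produce a polynomial with $S(P)$ equal to a prescribed union of residue classes, then derives the $1/n$-density case (Lemma~\ref{lem:1/n}), and combines one new factor at a time using Lemma~\ref{lem:combine}, whose disjointness is obtained from a pigeonhole argument on the finitely many subfields of $F_P$. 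You instead make the arithmetic explicit up front: extract the elementary lemma that any $s\in(0,1]$ is a finite product $\prod (n_i-1)/n_i$ (the paper uses this same identity implicitly, since its inductive step shows $1-\delta(S(PQ)) = (1-\delta(S(P)))(1-\delta(S(Q)))$, which unrolls to exactly your decomposition), then choose all primes $p_1,\dots,p_k$ distinct from the start so that joint linear disjointness of $\mathbb{Q}(\zeta_{p_1}),\dots,\mathbb{Q}(\zeta_{p_k})$ is immediate, bypassing Lemma~\ref{lem:combine} entirely. You also sidestep Schur's theorem by observing directly that $\Gal(K_n/\mathbb{Q})\cong\mathbb{Z}/n$ acts regularly on the roots of the minimal polynomial of a primitive element, so only the identity has a fixed point and Frobenius gives $\delta=1/n$ at once. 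The paper's Lemma~\ref{lem:combine} is more general (it works for an arbitrary $P$, not just one with cyclotomic splitting field), but for this application your upfront choice of distinct primes is shorter and avoids the appeal to the fundamental theorem of Galois theory. One small housekeeping point: your decomposition lemma applies to $s\in(0,1]$, so the boundary value $r=1$ (where $(b-a)/b=0$) needs to be noted separately, e.g.\ via $P(x)=x$; the paper handles the analogous boundary $m=0$ the same way.
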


\section{Preliminaries}

We refer the reader to the textbooks of Lang \cite{Lang} and van der Waerden \cite{Waerden} for more comprehensive expositions of the theory of field extensions. Here we only present the prerequisites needed to prove our results.

A complex number $\alpha$ is called an algebraic number if there exists some non-constant $P \in \mathbb{Q}[x]$ such that $P(\alpha) = 0$. To an algebraic number $\alpha$ we associate its minimal polynomial. This is the non-constant monic polynomial $P \in \mathbb{Q}[x]$ with the minimum possible degree such that $P(\alpha) = 0$. For example, the minimal polynomial of $\sqrt{2}$ is $x^2 - 2$. Any rational number $q$ is algebraic with minimal polynomial $x - q$.

Note that the minimal polynomial $P$ is unique. Indeed, if $P_1$ and $P_2$ are non-constant monic polynomials in $\mathbb{Q}[x]$ such that $P_1(\alpha) = P_2(\alpha) = 0$, $\deg(P_1) = \deg(P_2)$ and $P_1 \neq P_2$, then $P_1 - P_2$ is non-constant, has degree smaller than $\deg(P_1)$, satisfies $P_1(\alpha) - P_2(\alpha) = 0$ and is monic after multiplying by a suitable constant.

If $P$ is the minimal polynomial of $\alpha$, then for any polynomial $Q$ with rational coefficients satisfying $Q(\alpha) = 0$ we have $P | Q$. This is proven by the division algorithm: write $Q(x) = P(x)T(x) + R(x)$ with $\deg(R) < \deg(P)$. We have $R(\alpha) = 0$, so by the minimality of the degree of $P$ we must have $R = 0$.

We then prove that the minimal polynomial $P$ of $\alpha$ is irreducible over $\mathbb{Q}$. Assume the contrary, and write $P(x) = Q(x)R(x)$ for $Q, R \in \mathbb{Q}[x]$ non-constant. We may assume that $Q$ and $R$ are monic. Now $0 = P(\alpha) = Q(\alpha)R(\alpha)$, so either $Q(\alpha) = 0$ or $R(\alpha) = 0$. This contradicts the minimality of the degree of $P$. Conversely, if $P \in \mathbb{Q}[x]$ is monic, irreducible and satisfies $P(\alpha) = 0$, then $P$ is the minimal polynomial of $\alpha$, following from the result of the previous paragraph.

For an algebraic number $\alpha$ we denote by $\mathbb{Q}(\alpha)$ the set of numbers of the form
$$q_0 + q_1\alpha + \ldots + q_{d-1}\alpha^{d-1},$$
where $q_0, \ldots , q_{d-1}$ are rational numbers and $d$ is the degree of the minimal polynomial $P$ of $\alpha$. A sum of two such numbers is of such form, and it is not hard to see that the product of such numbers is also of such form, as from the equation $P(\alpha) = 0$ we obtain a method to calculate $\alpha^n$ in terms of $1, \alpha, \ldots , \alpha^{d-1}$ for all $n \ge d$. More formally, one may prove by a straight-forward induction that $\alpha^n$ may be expressed as a linear combination of $1, \alpha, \ldots , \alpha^{d-1}$ for any $n \ge d$.

Also, if $x \in \mathbb{Q}(\alpha)$ is not equal to zero, we have $\frac{1}{x} \in \mathbb{Q}(\alpha)$. We present a direct proof. We first claim that there exists rationals $q_0, q_1, \ldots , q_d$, not all zero, such that
$$q_0 + q_1x + \ldots + q_dx^d = 0.$$
Indeed, each power of $x$ may be written as a linear combination of the $d$ numbers $1, \alpha, \ldots , \alpha^{d-1}$. This linear combination can be thought as a $d$-dimensional vector. Since no $d+1$ vectors of dimension $d$ can be linearly independent, there are rationals $q_i$ satisfying the conditions. Let $i$ be the smallest index for which $q_i \neq 0$. We now have
$$q_{i+1}x^{i+1} + \ldots + q_dx^{d} = -q_ix^i.$$
Since $x \neq 0$ and $q_i \neq 0$, we may divide both sides by $-q_ix^{i+1}$. The left hand side is now a polynomial in $x$, which we know to be in $\mathbb{Q}(\alpha)$, while the right hand side is $\frac{1}{x}$. Thus, $\frac{1}{x} \in \mathbb{Q}(\alpha)$.

We may write the above results as follows: $\mathbb{Q}(\alpha)$ is a field. Recall that a field is a set of elements equipped with addition and multiplication. The addition and multiplication are assumed to be associative and commutative, to have neutral elements and to have inverse elements. Finally, we connect additivity and multiplicativity by the distributive law: $a(b+c) = ab + ac$.

We further emphasise the fact that $\mathbb{Q}(\alpha)$ contains the field of rational numbers $\mathbb{Q}$ inside of it by saying that $\mathbb{Q}(\alpha)$ is a field extension of $\mathbb{Q}$. To a field extension $\mathbb{Q}(\alpha)$ we define its degree as the degree of the minimal polynomial of $\alpha$. The degree of $\mathbb{Q}(\alpha)$ is denoted by $[\mathbb{Q}(\alpha) : \mathbb{Q}]$. Intuitively, the degree of an extension tells how ``large'' the extension is.

So far we have only focused on the rational numbers and its extensions. We may, however, further extend the field $\mathbb{Q}(\alpha)$ by adjoining an algebraic number $\beta$ to it. One may define a minimal polynomial over $\mathbb{Q}(\alpha)$ similarly as we did above for $\mathbb{Q}$ and prove that the set of polynomial expressions of $\alpha$ and $\beta$ form a field. We omit the details, as the proofs are so similar to the ones above. The obtained field is denoted by $\mathbb{Q}(\alpha, \beta)$.

The primitive element theorem tells that for any algebraic numbers $\alpha$ and $\beta$ the field $\mathbb{Q}(\alpha, \beta)$ has the same elements as $\mathbb{Q}(\gamma)$ for some suitable choice of $\gamma$. Thus, adjoining two algebraic numbers to $\mathbb{Q}$ may be done with just one algebraic number, and by induction this result generalizes for an arbitrary number of algebraic numbers.

We conclude this section by proving the primitive element theorem. The proof we give is based on the one given in (\cite{Waerden}, Section 6.10).  Let $\alpha$ and $\beta$ be given algebraic numbers. Let $\lambda$ be a rational parameter, and define $\gamma = \alpha + \lambda\beta$. We prove that for all except finitely many choices of $\lambda$ we have $\mathbb{Q}(\gamma) = \mathbb{Q}(\alpha, \beta)$, which proves the theorem. Note that $\mathbb{Q}(\gamma)$ is a subfield of $\mathbb{Q}(\alpha, \beta)$, so we focus on proving the other inclusion. For this it suffices to prove $\beta \in \mathbb{Q}(\gamma)$, as we then also have $\alpha = \gamma - \lambda\beta \in \mathbb{Q}(\gamma)$.

Let $F$ be the minimal polynomial of $\beta$ over $\mathbb{Q}(\gamma)$. We will prove that $F$ has degree $1$, which implies the claim. The idea is to construct two polynomials both of which $F$ divides and whose greatest common divisor has degree $1$.

Let $f$ and $g$ be the minimal polynomials of $\alpha$ and $\beta$ over $\mathbb{Q}$. Since $g \in \mathbb{Q}[x] \subset \mathbb{Q}(\gamma)[x]$ and $g(\beta) = 0$, $g$ is divisible by $F$. If we now define
$$h(x) = f(\gamma - \lambda x) \in \mathbb{Q}(\gamma)[x],$$
then $h(\beta) = f(\alpha) = 0$, so $h$ too is divisible by $F$.

It remains to show that the greatest common divisor of $g$ and $h$ has degree one. Assume not. Since $g$ is the minimal polynomial of $\beta$ over $\mathbb{Q}$, it can't have $\beta$ as its double root (as otherwise the derivative $g'$ of $g$ would have $\beta$ as its root, and thus $g$ wouldn't be the minimal polynomial of $\beta$). Thus, there must be some $\beta' \neq \beta$ for which $g(\beta') = h(\beta') = 0$. By the definition of $h$ this implies
$$f(\alpha + \lambda\beta - \lambda\beta') = 0,$$
so $\alpha + \lambda\beta - \lambda\beta'$ equals $\alpha'$ for some root $\alpha'$ of $f$. We now get
$$\lambda = \frac{\alpha' - \alpha}{\beta - \beta'}.$$
Since the roots $\alpha'$ and $\beta'$ attain only finitely many values, we have obtained a contradiction assuming $\lambda$ does not belong to a finite set of exceptional values.

\section{Motivation behind Theorem \ref{thm:intersection}}
\label{sec:motivation}

Let $A(x) = x^2 - 2$ and $B(x) = x^2 + 1$. We prove that there exists a polynomial $D \in \mathbb{Z}[x]$ such that $S(A) \cap S(B) = S(D)$.

Note that $\alpha := \sqrt{2}$ is a root of $A$ and $\beta := i$ is a root of $B$. We choose $D$ to be the minimal polynomial of $\sqrt{2} + i$. We first prove that $D(x) = x^4 - 2x^2 + 9$. Note that $D$ is monic and irreducible.\footnote{There are many ways to prove irreducibility. We give a direct argument. By the rational root theorem one sees that $D$ has no rational roots. Thus, if $D$ is reducible over $\mathbb{Q}$, it must have two second degree factors $P$ and $Q$. Since $D(\sqrt{2} + i) = 0$, one of the factors, say $P$, must have $\sqrt{2} + i$ as its root. On the other hand, by the quadratic formula we get that the roots of $P$ are of the form $a \pm \sqrt{b}$, where $a$ and $b$ are some rational numbers. By comparing imaginary parts we get that $b = -1$. This implies that $a = \sqrt{2}$, which contradicts the irrationality of $\sqrt{2}$.} We now check that $D$ has $\alpha + \beta$ as its root.

For simplicity, let $\gamma = (\alpha + \beta)^2$. Opening the brackets and using $\alpha^2 = 2$ and $\beta^2 = -1$ we get
$$\gamma = 2\alpha\beta + 1.$$
Now,
$$D(\alpha + \beta) = \gamma^2 - 2\gamma + 9 = (-7 + 4\alpha\beta) - 2(2\alpha\beta + 1) + 9 = 0,$$
as desired.

We now prove one direction of our claim, namely that $S(A) \cap S(B) \subset S(D)$. Let $p \in S(A) \cap S(B)$ be given, and let $A(a) \equiv B(b) \equiv 0 \pmod{p}$. The idea is that $a$ corresponds to ``$\sqrt{2}$ modulo $p$'', and similarly $b$ corresponds to $i$. Indeed, we may replicate the proof above in this context. Let $c = (a+b)^2$. By using $a^2 \equiv 2 \pmod{p}$ and $b^2 \equiv -1 \pmod{p}$ we get
$$c \equiv 2ab + 1 \pmod{p}.$$
Therefore,
$$D(a+b) \equiv c^2 - 2c + 9 \equiv (-7 + 4ab) - 2(2ab + 1) + 9 \equiv 0 \pmod{p},$$
so $p \in S(D)$.

The other direction of the claim is a bit trickier. The proof above relies on the fact that if $a$ and $b$ are roots of $A$ and $B$ modulo $p$, then we may generate from $a$ and $b$ a root of $D$ modulo $p$ (simply by picking $a+b$). For the other direction we pick some root $d$ of $D$ modulo $p$ and generate a root of $A$ modulo $p$, and similarly for $B$. The method is not our creation, but may be found in (\cite{GB}, Theorem 2).

The idea is that the root $\alpha + \beta$ of $D$ was chosen so that $\mathbb{Q}(\alpha, \beta) = \mathbb{Q}(\alpha + \beta)$ (recall the primitive element theorem). Therefore, there exists some polynomial $A_*$ such that
$$A_*(\alpha + \beta) = \alpha.$$
In this case we may explicitly calculate that
$$A_*(x) = \frac{5x - x^3}{6}.$$
Now, the polynomial $A(A_*(x))$ has $\alpha + \beta$ as its root, so it is divisible by the minimal polynomial $D$ of $\alpha + \beta$. Therefore,
$$A(A_*(x)) = D(x)E(x)$$
for some polynomial $E \in \mathbb{Q}[x]$, which in this case equals $\frac{1}{36}(x^2 - 8)$.

Now, if $p \in S(D)$ and $d$ is an integer such that $D(d) \equiv 0 \pmod{p}$, then we have
$$A(A_*(d)) = D(d)E(d) = D(d) \cdot \frac{1}{36}(d^2 - 8).$$
For $p = 2$ and $p = 3$ the denominator $36$ is divisible by $p$, which causes problems. For other $p$ we get that the equation $D(d) \equiv 0 \pmod{p}$ implies $D(d) \cdot \frac{1}{36}(d^2 - 8) \equiv 0 \pmod{p}$. This proves that $A(A_*(d)) \equiv 0 \pmod{p}$, so $A$ has a root $A_*(d) = \frac{5d-d^3}{6}$ modulo $p$. Again, the denominator causes problems for $p = 2$ and $p = 3$, but for other values this argument works.

\hrulefill

It is worth to take a moment to convince the reader that one may handle divisibility of rational numbers in this way. For a rational number $\frac{a}{b}$ with $\gcd(a, b) = 1$ and a positive integer $m$ we define $m \mid \frac{a}{b}$ if $m \mid a$. (Note that this implies $\gcd(m, b) = 1$.) For convenience, we always assume that the rational numbers are written in their lowest terms (with $n = \frac{n}{1}$ for an integer $n$).

The following lemmas follow easily from the definition.

\begin{lemma}
\label{lem:divSum}
Let $m$ be a positive integer and let $r_1$ and $r_2$ be rational numbers such that $m \mid r_1$ and $m \mid r_2$. Then $m \mid r_1 + r_2$.
\end{lemma}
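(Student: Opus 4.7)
The plan is to unwind the definition and reduce everything to an ordinary divisibility statement about integers. Write $r_1 = a_1/b_1$ and $r_2 = a_2/b_2$ in lowest terms, so that by hypothesis $m \mid a_1$, $m \mid a_2$, and consequently $\gcd(m, b_1) = \gcd(m, b_2) = 1$. Then form the sum
$$r_1 + r_2 = \frac{a_1 b_2 + a_2 b_1}{b_1 b_2},$$
which need not yet be in lowest terms. My goal is to show that after reducing to lowest terms, the numerator is still divisible by $m$.

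First I would observe that since $m$ divides both $a_1$ and $a_2$, it divides the numerator $a_1 b_2 + a_2 b_1$. Next, from $\gcd(m, b_1) = \gcd(m, b_2) = 1$ I conclude $\gcd(m, b_1 b_2) = 1$. Let $d = \gcd(a_1 b_2 + a_2 b_1, \, b_1 b_2)$, so that the reduced form of the sum is
$$r_1 + r_2 = \frac{(a_1 b_2 + a_2 b_1)/d}{(b_1 b_2)/d},$$
with coprime numerator and denominator. Since $d \mid b_1 b_2$ and $\gcd(m, b_1 b_2) = 1$, I get $\gcd(m, d) = 1$. Then $m \mid (a_1 b_2 + a_2 b_1)$ combined with $\gcd(m, d) = 1$ yields $m \mid (a_1 b_2 + a_2 b_1)/d$, which is exactly what the definition of $m \mid r_1 + r_2$ requires.

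There is no real obstacle here; the only thing to watch is that the reduction to lowest terms does not secretly introduce a factor shared with $m$, and that is precisely why I verify $\gcd(m, d) = 1$ before cancelling $d$ out of the numerator.
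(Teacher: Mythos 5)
Your proof is correct, and it fills in a detail the paper leaves implicit: the paper simply remarks that this lemma ``follows easily from the definition'' and gives no argument. Your argument is the natural one, and the key point you isolate is exactly the one worth isolating, namely that reduction to lowest terms only cancels a factor $d$ dividing $b_1 b_2$, which is coprime to $m$, so divisibility by $m$ survives the cancellation. One tiny streamlining is possible: rather than explicitly computing the reduced fraction, you could note that if $r_1 + r_2 = a/b$ in lowest terms, then $b \mid b_1 b_2$ (so $\gcd(m,b)=1$) and $a\, b_1 b_2 = b\,(a_1 b_2 + a_2 b_1)$; since $m$ divides the right-hand side and is coprime to $b_1 b_2$, it divides $a$. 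This avoids naming $d$ but is the same idea. Either way, the substance is right.
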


\begin{lemma}
\label{lem:divProd}
Let $m$ be a positive integer and let $r_1$ and $r_2$ be rational numbers. Assume that $m \mid r_1$ and that the greatest common divisor of $m$ and the denominator of $r_2$ equals $1$. Then $m \mid r_1r_2$.
\end{lemma}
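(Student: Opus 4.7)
The plan is to unfold the definition of divisibility for rationals and reduce the problem to a standard statement about divisibility of integers.

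First I would write $r_1 = a_1/b_1$ and $r_2 = a_2/b_2$, both in lowest terms. The hypothesis $m \mid r_1$ then means, by definition, that $m \mid a_1$ as integers. The hypothesis $\gcd(m, b_2) = 1$ is already phrased directly at the integer level. The goal is to write $r_1 r_2 = N/D$ in lowest terms and show $m \mid N$.

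Next I would observe that $N/D = (a_1 a_2)/(b_1 b_2)$, so there is a positive integer $k = \gcd(a_1 a_2, b_1 b_2)$ with $a_1 a_2 = kN$ and $b_1 b_2 = kD$. Since $m \mid a_1$, clearly $m \mid a_1 a_2 = kN$. To deduce $m \mid N$ I would argue that $\gcd(m, k) = 1$. For this, note first that $m \mid a_1$ together with $\gcd(a_1, b_1) = 1$ forces $\gcd(m, b_1) = 1$, because any common prime divisor of $m$ and $b_1$ would also divide $a_1$, contradicting $\gcd(a_1, b_1) = 1$. Combined with the given $\gcd(m, b_2) = 1$, this yields $\gcd(m, b_1 b_2) = 1$. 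Since $k \mid b_1 b_2$, also $\gcd(m, k) = 1$, and therefore $m \mid N$, as required.

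The main obstacle, such as it is, is that the product $(a_1 a_2)/(b_1 b_2)$ need not already be in lowest terms, so the divisibility of the numerator before cancellation does not immediately transfer to the reduced numerator; the coprimality argument above is precisely what bridges this gap. Everything else is a direct application of elementary properties of $\gcd$ in $\mathbb{Z}$.
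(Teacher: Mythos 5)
Your proof is correct. The paper does not actually prove this lemma; it only remarks that Lemmas \ref{lem:divSum} and \ref{lem:divProd} ``follow easily from the definition,'' so there is no proof to compare against. Your argument is the natural direct verification: write everything in lowest terms, note that $m\mid a_1$ together with $\gcd(a_1,b_1)=1$ forces $\gcd(m,b_1)=1$ (a fact the paper itself records in passing right after its definition of rational divisibility), combine with $\gcd(m,b_2)=1$ to get $\gcd(m,b_1b_2)=1$, and then cancel the common factor $k$ from $a_1a_2/(b_1b_2)$; coprimality of $m$ with $k$ lets the divisibility $m\mid a_1a_2=kN$ descend to $m\mid N$. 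This is exactly the elementary gcd bookkeeping the authors have in mind, and you have correctly identified the one point that needs care, namely that $(a_1a_2)/(b_1b_2)$ need not already be reduced.
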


The most difficult result we use related to this generalized definition is the following.

\begin{lemma}
\label{lem:ratRoot}
Let $P \in \mathbb{Z}[x]$ and let $p$ be a prime number. Assume that $p \mid P(\frac{a}{b})$ and $p \nmid b$. Then $p \in S(P)$.
\end{lemma}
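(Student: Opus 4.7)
The plan is to replace the rational $a/b$ by an integer $n$ that plays the role of $a/b$ modulo $p$, and then check that $P$ vanishes at $n$ modulo $p$. Since $p \nmid b$, the residue class of $b$ is invertible modulo $p$, so I can choose an integer $n$ with $bn \equiv a \pmod{p}$. My claim will be that this $n$ satisfies $p \mid P(n)$, which is exactly what it means to have $p \in S(P)$.

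To verify this, write $P(x) = \sum_{i=0}^{d} c_i x^i$ with $c_i \in \mathbb{Z}$ and $d = \deg P$, and consider the integer
\[
M := b^{d} P(a/b) = \sum_{i=0}^{d} c_i \, a^i \, b^{d-i}.
\]
The first step is to argue that $p \mid M$. This is where Lemma \ref{lem:divProd} enters: by hypothesis $p \mid P(a/b)$, and since $b^d$ is an integer its denominator equals $1$, which is coprime to $p$, so $p \mid b^d \cdot P(a/b) = M$. The second step is to compare $M$ with $b^d P(n)$. Using $bn \equiv a \pmod p$ we get $b^d n^i = b^{d-i}(bn)^i \equiv b^{d-i} a^i \pmod p$ for each $i$, hence
\[
b^{d} P(n) \;=\; \sum_{i=0}^{d} c_i \, n^i \, b^{d} \;\equiv\; \sum_{i=0}^{d} c_i \, a^i \, b^{d-i} \;=\; M \pmod{p}.
\]
Combining the two steps, $p \mid b^d P(n)$, and since $p \nmid b$ we conclude $p \mid P(n)$, so $p \in S(P)$.

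There is no real obstacle here; the only subtlety is bookkeeping the divisibility conventions for rationals. The whole argument is essentially the familiar observation that reducing a rational number modulo $p$ (when its denominator is coprime to $p$) is a ring homomorphism, so the substitution $x = a/b$ can be replaced with $x = n$ whenever $n \equiv ab^{-1} \pmod{p}$. I would present it with the clearing-of-denominators computation above to make the application of Lemma \ref{lem:divProd} explicit.
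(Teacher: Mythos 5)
Your proof is correct and follows essentially the same route as the paper: both clear denominators to form the integer $b^d P(a/b)$, then transfer divisibility to $P(n)$ where $n$ represents $a/b$ modulo $p$ (the paper takes $n = ab^{p-2}$ via Fermat's little theorem, you take any $n$ with $bn \equiv a \pmod p$, which is the same thing). The only difference is presentational; your explicit invocation of Lemma \ref{lem:divProd} to justify $p \mid b^d P(a/b)$ is a nice touch that the paper leaves implicit.
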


In other words, a rational root modulo $p$ implies an integer root modulo $p$, assuming the numerator is not divisible by $p$. 

\begin{proof}
Let $n = ab^{p-2}$.\footnote{Note that, by Fermat's little theorem, $b^{p-2}$ is the inverse element of $b$ modulo $p$. Thus, one may think of $n$ as ``$\frac{a}{b}$ modulo $p$''.} We prove that $P(n) \equiv 0 \pmod{p}$. Write $P(x) = c_dx^d + \ldots + c_1x + c_0$. Since $p \mid P(\frac{a}{b})$, the number
$$b^dP\left(\frac{a}{b}\right) = \sum_{i = 0}^d c_ia^ib^{d-i}$$
is divisible by $p$. The right hand side is an integer. Consider it modulo $p$ and use Fermat's little theorem to get
$$0 \equiv \sum_{i = 0}^d c_ia^ib^{d-i} \equiv b^d \cdot \sum_{i = 0}^d c_ia^ib^{(p-2)i} \pmod{p}.$$
Since $p \nmid b$, we have
$$0 \equiv \sum_{i = 0}^d c_ia^ib^{(p-2)i} = P(n) \pmod{p},$$
and therefore $p \in S(P)$.
\end{proof}

\hrulefill

We have therefore proven that for all $p \neq 2, 3$ we have that $p \in S(D)$ implies $p \in S(A)$ (and we similarly prove $p \in S(B)$). Looking back, we do have $3 \in S(D)$, but $3 \not\in S(A) \cap S(B)$. This, however, can be dealt with by an elementary argument. Define
$$D_3(x) = \frac{D(9x)}{9},$$
so $D_3 \in \mathbb{Z}[x]$, and $3 \not\in S(D_3)$, as all coefficients of $D_3$ are divisible by $3$ except for the constant term. One also sees that $p \in S(D_3)$ if and only if $p \in S(D)$ for $p \neq 3$. Thus, one may ``fix'' the polynomial $D$ to not have $3$ as its prime divisor.

The proof of the general case proceeds along similar lines, and many steps do not differ much from the special case above. However, there are two steps which do not immediately generalize. First, we must formalize the concept of considering ``$\alpha$ modulo $p$'' for an algebraic number $\alpha$. Secondly, we must formalize the proof of the part $S(A) \cap S(B) \subset S(D)$. There exists cases that are more difficult than the one considered above, such as
$$A(x) = (x^2 - 123)^2 - 789$$
and
$$B(x) = (x^2 - 456)^2 - 789,$$
where we have
$$\alpha^2 = \beta^2 - 333$$
(for a suitable choice of $\alpha$ and $\beta$). This is a non-trivial relation between $\alpha$ and $\beta$ that has to be taken into account when simplifying polynomial expressions of $\alpha$ and $\beta$.

\section{Proof of Theorem \ref{thm:intersection}}

We may assume $n = 2$, as the general case follows by induction. Define $A = P_1$ and $B = P_2$. We first consider the case when $A$ and $B$ are irreducible over $\mathbb{Q}$, and prove the existence of an ``almost working'' $D \in \mathbb{Z}[x]$, meaning that $S(D)$ and $S(A) \cap S(B)$ differ by only finitely many primes.

\subsection{Almost working $D$ for irreducible $A$ and $B$}

Our first results concern formalizing the idea of considering ``$\alpha$ modulo $p$''.

\begin{lemma}
\label{lem:homoHelp}
Let $P \in \mathbb{Z}[x]$ be irreducible (over $\mathbb{Q}$), $P(\alpha) = 0$ and $P(a) \equiv 0 \pmod{p}$ for some prime $p$ and $a \in \mathbb{Z}$. Assume that $p$ does not divide the leading coefficient of $P$. Then, for all $Q \in \mathbb{Z_c}[x]$ with $Q(\alpha) = 0$ we have $Q(a) \equiv 0 \pmod{p}$.
\end{lemma}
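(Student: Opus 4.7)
The plan is to divide $Q$ by $P$ while staying in $\mathbb{Z}[x]$, and to observe that the remainder vanishes at $\alpha$ and therefore must be zero. Concretely, I would invoke pseudo-division: running the usual polynomial long-division of $Q$ by $P$ but multiplying through by the leading coefficient $L$ of $P$ at each step to avoid creating denominators, one obtains a non-negative integer $k$ and polynomials $T, R \in \mathbb{Z}[x]$ with $\deg R < \deg P$ such that
$$L^k Q(x) = P(x) T(x) + R(x).$$

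The next step is to identify $R$. Substituting $x = \alpha$ and using $P(\alpha) = Q(\alpha) = 0$ yields $R(\alpha) = 0$. Since $P$ is irreducible over $\mathbb{Q}$ with $P(\alpha) = 0$, the preliminaries show that $P/L$ is the minimal polynomial of $\alpha$, so the minimal polynomial has degree $\deg P$. As $R \in \mathbb{Q}[x]$ has degree strictly less than $\deg P$ and vanishes at $\alpha$, minimality forces $R = 0$. Hence $L^k Q = PT$ as an identity in $\mathbb{Z}[x]$.

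Evaluating this identity at $a$ gives $L^k Q(a) = P(a) T(a)$. By hypothesis $p \mid P(a)$ and $p \nmid L$, so $p \nmid L^k$, and therefore $p \mid Q(a)$, which is exactly what we want.

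The step that most needs care is the pseudo-division identity, and this is precisely where the hypothesis $p \nmid L$ becomes essential: without the factor $L^k$ one only obtains $Q = PT + R$ with $T, R \in \mathbb{Q}[x]$, and the denominators appearing in $T$ could themselves be divisible by $p$, breaking the final congruence step. Phrasing the division inside $\mathbb{Z}[x]$ isolates all bad primes into the single quantity $L^k$, which the hypothesis then kills.
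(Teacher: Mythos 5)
Your proof is correct, and it takes a genuinely different route from the paper's. Both arguments start by reducing to the fact that $P/L$ is the minimal polynomial of $\alpha$, but you diverge at the division step. You run a pseudo-division entirely inside $\mathbb{Z}[x]$, obtaining $L^k Q = PT + R$ with $T, R \in \mathbb{Z}[x]$; the substitution $x = \alpha$ kills $R$ by minimality, and then evaluating at $a$ lets the hypothesis $p \nmid L$ finish the job in one line. The paper instead writes $Q = PR$ with $R \in \mathbb{Q}[x]$ and then proves directly, by a coefficient-by-coefficient contradiction argument (taking the largest index $i$ for which $p$ divides the denominator of $r_i$ and examining the integer coefficient of $x^{d_P + i}$ in $Q = PR$), that no denominator of $R$ is divisible by $p$; it then appeals to the paper's Lemma~\ref{lem:divProd} on divisibility of products of rationals. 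Your version is shorter and sidesteps all rational arithmetic, concentrating the $p$-dependence into the single scalar $L^k$; the cost is that it treats the existence of pseudo-division as known, whereas the paper's denominator argument is self-contained (modulo its own elementary Lemma~\ref{lem:divProd}). One small remark on your closing paragraph: the alternative route $Q = PT + R$ over $\mathbb{Q}[x]$ is not actually ``broken'' --- the paper shows the denominators in the rational quotient are in fact coprime to $p$ --- it just requires the extra coefficient argument that your pseudo-division avoids.
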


\begin{proof}

Let $0 \neq b \in \mathbb{Z}$ be the leading coefficient of $P$. Since $P$ is irreducible, the minimal polynomial of $\alpha$ is $\frac{P}{b}$. The polynomial $Q$ has $\alpha$ as its root, so $Q$ is divisible by the minimal polynomial of $\alpha$, and therefore $\frac{P}{b} \mid Q$. Therefore there exists $R \in \mathbb{Q}[x]$ for which $Q = PR$. We shall prove by a proof by contradiction that the denominators of the coefficients of $R$ are not divisible by $p$. As $Q(a) = P(a)R(a)$, we are then done by Lemma \ref{lem:divProd}.

Let $P(x) = p_{d_P}x^{d_P} + \ldots + p_1x + p_0$ and $R(x) = r_{d_R}x^{d_R} + \ldots + r_0$. Assume that there exists such an index $i$ that $p$ divides the denominator of $r_i$, and pick the greatest $i$ satisfying this condition. We look at the coefficient of the term $x^{d_P + i}$ in $Q$. This is an integer which is by $Q = PR$ equal to $r_ip_{d_P} + r_{i+1}p_{d_P - 1} + \ldots r_{i + d_P}p_0$, where we define $r_j = 0$ for $j > d_R$. The denominator of the rational number $r_{i+1}p_{d_P - 1} + \ldots + r_{i+d_P}p_0$ is not divisible by $p$, but the denominator of $r_ip_{d_P}$ is, so the sum is not an integer. This is a contradiction, and therefore the denominator of $r_i$ is not divisible by $p$ for any $i$.
\end{proof}

Let $\alpha$ be some algebraic number and let $T = \{1, \alpha, \alpha^2, \ldots , \alpha^{\deg(P) - 1}\}$ be a basis of the $\mathbb{Q}$-vector space $\mathbb{Q}(\alpha)$, where $P$ is the minimal polynomial of $\alpha$. Define $\mathbb{Z}^p(\alpha)$ to be the set of those $r \in \mathbb{Q}(\alpha)$ whose representation by the basis $T$ does not contain any rational number whose denominator is divisible by $p$. If $p$ does not divide the denominator of any coefficient of $P$, the numbers $\mathbb{Z}^p(\alpha)$ form a ring, as one may express $\alpha^n$ in terms of the elements of $T$ for $n \ge \deg(P)$ by repeated simplifying by using the equation $P(\alpha) = 0$. A rigorous proof can be given by proving $\alpha^n \in \mathbb{Z}^p(\alpha)$ by induction for all $n$, which we leave to the reader.

The next lemma proves the existence of a homomorphism from $\mathbb{Z}^p(\alpha)$ to the set $\mathbb{F}_p$ of integers modulo $p$. This result is critical, as it formalizes the idea of considering algebraic numbers modulo primes mentioned earlier.

\begin{lemma}
\label{lem:homo}
Let $P \in \mathbb{Z_c}[x]$ be irreducible (over $\mathbb{Q}$). Let $P(a) \equiv 0 \pmod{p}$ for some prime $p$ and $a \in \mathbb{Z}$ , where $p$ does not divide the leading coefficient of $P$. Let $P(\alpha) = 0$. There exists a ring homomorphism $\phi : \mathbb{Z}^p(\alpha) \to \mathbb{F}_p$ for which $\phi(\alpha) = a$.
\end{lemma}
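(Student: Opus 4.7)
The plan is to define $\phi$ by evaluating the unique basis representation of each element at $x = a$ and to verify the ring homomorphism axioms. Every $r \in \mathbb{Z}^p(\alpha)$ has a unique expression $r = q_0 + q_1 \alpha + \ldots + q_{d-1}\alpha^{d-1}$ with $d = \deg(P)$ and each $q_i \in \mathbb{Z}^p := \{u/v \in \mathbb{Q} : \gcd(v,p) = 1\}$, so I would set $\phi(r) := q_0 + q_1 a + \ldots + q_{d-1} a^{d-1} \pmod{p}$, interpreting each rational $q_i = u_i/v_i$ modulo $p$ via the standard reduction $\mathbb{Z}^p \to \mathbb{F}_p$ that sends $u/v$ to $u v^{p-2} \pmod{p}$. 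Well-definedness is immediate from uniqueness of the basis expansion, and $\phi(\alpha) = a$ is read off the representation $\alpha = 0 + 1 \cdot \alpha + 0 \cdot \alpha^2 + \ldots$.

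Additivity of $\phi$ is straightforward, as the basis representation of $r_1 + r_2$ is the coefficient-wise sum and $\mathbb{Z}^p \to \mathbb{F}_p$ is itself a ring map. The substantive step is multiplicativity. Given $r_1 = f(\alpha)$ and $r_2 = g(\alpha)$ with $f, g \in \mathbb{Z}^p[x]$ of degree less than $d$, the product $r_1 r_2 = (fg)(\alpha)$ need not be in basis form. To reduce it, I would perform polynomial long division of $fg$ by the minimal polynomial $P/b$ of $\alpha$, where $b$ is the leading coefficient of $P$: write $fg = Q \cdot (P/b) + h$ with $\deg h < d$. The hypothesis $p \nmid b$ guarantees that $P/b$ has coefficients in $\mathbb{Z}^p$, and since $P/b$ is monic, long division keeps both $Q$ and $h$ in $\mathbb{Z}^p[x]$. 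Thus $r_1 r_2 = h(\alpha)$ is the basis representation of the product, so $\phi(r_1 r_2) = h(a) \pmod{p}$.

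To close the argument, I would evaluate the identity $fg = Q \cdot (P/b) + h$ at $x = a$, obtaining $f(a) g(a) = Q(a) \cdot P(a)/b + h(a)$ in $\mathbb{Z}^p$. Since $p \mid P(a)$ by hypothesis and $p \nmid b$, the rational $P(a)/b$ is divisible by $p$ in the extended sense; combined with $Q(a) \in \mathbb{Z}^p$, Lemmas \ref{lem:divProd} and \ref{lem:divSum} give $f(a) g(a) \equiv h(a) \pmod{p}$, which matches $\phi(r_1)\phi(r_2)$ and establishes multiplicativity. The main obstacle throughout is the need to invert $b$ while staying in the $p$-local ring $\mathbb{Z}^p$, which is exactly what the assumption $p \nmid b$ enables; without it, neither the ring structure on $\mathbb{Z}^p(\alpha)$ invoked in the definition nor the polynomial reduction in the multiplicativity step would go through.
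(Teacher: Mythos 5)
Your proof is correct, but it reorganizes the argument in a genuinely different way from the paper. The paper defines $\phi$ on an \emph{arbitrary} polynomial representation $x = Q_x(\alpha)$ with $Q_x$ having $p$-integral coefficients, so that the homomorphism properties are automatic (one may take $Q_{x_1x_2} = Q_{x_1}Q_{x_2}$, etc.), and the entire burden falls on well-definedness, which is delegated to Lemma~\ref{lem:homoHelp}. You instead define $\phi$ on the \emph{canonical} basis representation of degree less than $d$, so well-definedness is free, and the burden falls on multiplicativity, which you discharge by Euclidean division by the monic polynomial $P/b$ and then evaluating at $x = a$. The two routes are dual to each other, and the crux is the same in both: since $p \nmid b$, the polynomial $P/b$ is monic with coefficients in the local ring $\mathbb{Z}^p$, so the quotient and remainder in the division also lie in $\mathbb{Z}^p[x]$ (this is exactly the content of the paper's Lemma~\ref{lem:homoHelp}, which you effectively re-derive on the fly rather than invoking it). A small stylistic difference: you make explicit the intermediate reduction map $\mathbb{Z}^p \to \mathbb{F}_p$ sending $u/v$ to $uv^{p-2}$ and verify the homomorphism axioms pointwise, which the paper leaves implicit. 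Your approach is self-contained and arguably makes the ring structure on $\mathbb{Z}^p(\alpha)$ more transparent; the paper's approach has the advantage that it only needs to verify a single nontrivial fact and can re-use it verbatim in the next lemma. Both are fine proofs of the same statement.
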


\begin{proof}

Every $x \in \mathbb{Z}^p(\alpha)$ can be represented as $x = Q_x(\alpha)$ for some $Q_x \in \mathbb{Q}[x]$, where the denominators of the coefficients of $Q_x$ are not divisible by $p$. We define $\phi(Q_x(\alpha)) \equiv Q_x(a) \pmod{p}$. Obviously this map is a homomorphism for which $\phi(\alpha) = a$. However, since the polynomial $Q_x$ is not unique, we have to check that $\phi$ is well defined.

Assume $x = Q_1(\alpha) = Q_2(\alpha)$. We want to prove $Q_1(a) \equiv Q_2(a) \pmod{p}$. Let $Q := Q_1 - Q_2$, so we want $Q(a) \equiv 0 \pmod{p}$ for all $Q(\alpha) = 0$. We may multiply $Q$ with the product of the denominators of its coefficients, which is not divisible by $p$. The problem is now reduced to the case $Q \in \mathbb{Z}[x]$, which has been handled in Lemma \ref{lem:homoHelp}.
\end{proof}

We now turn to proving that there exists a polynomial $D \in \mathbb{Z}[x]$ such that $S(A) \cap S(B)$ and $S(D)$ differ by only finitely many primes, when $A$ and $B$ are irreducible polynomials with integer coefficients. In what follows, $\alpha$ is some (fixed) root of $A$. Write $B = E_1E_2 \ldots E_t$, where $E_i \in \mathbb{Q}(\alpha)[x]$ are non-constant polynomials irreducible in $\mathbb{Q}(\alpha)$, and let $\beta_i$ be some root of $E_i$. By the primitive element theorem (or rather the proof of it), for all $1 \le i \le t$ there exists $n_i \in \mathbb{Z}$ for which $\mathbb{Q}(\alpha, \beta_i) = \mathbb{Q}(\alpha + n_i\beta_i)$. Let $D_i$ be the minimal polynomial of $\alpha + n_i\beta_i$ over $\mathbb{Q}$ multiplied by a suitable non-zero integer so that $D_i \in \mathbb{Z}[x]$.

For our desired polynomial $D$ we choose $D = D_1D_2 \ldots D_t$, and we now prove that this choice works. This is done by the following two lemmas.

\begin{lemma}
\label{lem:ABsubD}
Let $A, B$ and $D$ be as above. For all but finitely many $p$ for which $p \in S(A) \cap S(B)$ we have $p \in S(D)$.
\end{lemma}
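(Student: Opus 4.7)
The plan is to promote the argument from Section \ref{sec:motivation} to this general setting via the ring homomorphism of Lemma \ref{lem:homo}. Pick $p \in S(A) \cap S(B)$ and integers $a,b$ with $A(a) \equiv B(b) \equiv 0 \pmod{p}$. For all but finitely many $p$ (specifically, those not dividing the leading coefficient of $A$), Lemma \ref{lem:homo} supplies a ring homomorphism $\phi : \mathbb{Z}^p(\alpha) \to \mathbb{F}_p$ with $\phi(\alpha) = a$, and we extend $\phi$ coefficient-wise to $\mathbb{Z}^p(\alpha)[y] \to \mathbb{F}_p[y]$. The idea is then to manufacture from $a,b$ a concrete root of some $D_{i_0}$ modulo $p$, namely $a + n_{i_0} b$.

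\emph{Step 1: locating the right factor $E_{i_0}$.} From the factorization $B = E_1 \cdots E_t$ in $\mathbb{Q}(\alpha)[y]$, apply $\phi$ coefficient-wise (legal whenever $p$ does not appear in the denominators of the finitely many coefficients of the $E_i$) and evaluate at $y=b$. Since $B \in \mathbb{Z}[y]$ and $\phi$ reduces integers mod $p$, this gives
\[ 0 \equiv B(b) \equiv \phi(E_1)(b) \cdots \phi(E_t)(b) \pmod{p}, \]
so $\phi(E_{i_0})(b) \equiv 0 \pmod{p}$ for some index $i_0$.

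\emph{Step 2: exploiting $\mathbb{Q}(\alpha + n_{i_0}\beta_{i_0}) = \mathbb{Q}(\alpha,\beta_{i_0})$.} Consider $Q(y) := D_{i_0}(\alpha + n_{i_0} y) \in \mathbb{Q}(\alpha)[y]$. Since $D_{i_0}$ annihilates $\alpha + n_{i_0}\beta_{i_0}$, we have $Q(\beta_{i_0}) = 0$, so $Q$ is divisible by the minimal polynomial of $\beta_{i_0}$ over $\mathbb{Q}(\alpha)$, which is a scalar multiple of $E_{i_0}$. Hence $Q(y) = E_{i_0}(y) F_{i_0}(y)$ for some $F_{i_0} \in \mathbb{Q}(\alpha)[y]$.

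\emph{Step 3: applying $\phi$ to the identity.} For all but finitely many $p$ the coefficients of $F_{i_0}$ also lie in $\mathbb{Z}^p(\alpha)$, and then applying $\phi$ coefficient-wise to $D_{i_0}(\alpha + n_{i_0} y) = E_{i_0}(y) F_{i_0}(y)$ and evaluating at $y=b$ yields
\[ D_{i_0}(a + n_{i_0} b) \equiv \phi(E_{i_0})(b) \, \phi(F_{i_0})(b) \equiv 0 \pmod{p}, \]
so $a + n_{i_0} b$ is a root of $D_{i_0}$ mod $p$ and thus $p \in S(D_{i_0}) \subseteq S(D)$.

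The main technical obstacle is keeping the set of exceptional primes finite. Three sources arise: primes dividing the leading coefficient of $A$ (needed for Lemma \ref{lem:homo}); primes dividing a denominator appearing in a coefficient of some $E_i$ or $F_i$ when written in the basis $\{1,\alpha,\ldots,\alpha^{\deg A - 1}\}$ (needed so these polynomials lie in $\mathbb{Z}^p(\alpha)[y]$); and primes dividing the integer scalars used to clear denominators when passing from the rational minimal polynomials of the $\alpha + n_i \beta_i$ to the integer polynomials $D_i$. Each of these three sets is manifestly finite, so their union is finite and the lemma follows.
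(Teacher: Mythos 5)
Your proof is correct and takes essentially the same approach as the paper: it relies on the ring homomorphism $\phi$ from Lemma~\ref{lem:homo}, the factorization $B = E_1\cdots E_t$ over $\mathbb{Q}(\alpha)$, and the divisibility $E_i \mid D_i(\alpha + n_i x)$, with the same kind of denominator bookkeeping. The only cosmetic difference is that the paper forms the single product $D_* = \prod_i D_i(\alpha + n_i x) = B\cdot F$ and extracts a vanishing factor of $\prod_i D_i(a + n_i b)$ at the end using that $\mathbb{F}_p$ is a domain, whereas you locate the index $i_0$ with $\phi(E_{i_0})(b)\equiv 0$ upfront and work with that single factorization; as a minor remark, your third source of exceptional primes (scalars used to clear denominators in $D_i$) is actually harmless and need not be excluded, since $p\mid D_i$ forces $p\in S(D_i)$ anyway.
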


\begin{proof}

Let $p \in S(A) \cap S(B)$ be given. We discard those finitely many primes $p$ which divide the leading coefficient of $A$. Let $a$ and $b$ be integers such that $A(a) \equiv B(b) \equiv 0 \pmod{p}$, and let $\phi$ be a ring homomorphism $\mathbb{Z}^p(\alpha) \to \mathbb{F}_p$ for which $\phi(\alpha) = a$, whose existence is guaranteed by Lemma \ref{lem:homo} (as $p$ does not divide the leading coefficient of $A$).

Consider the polynomial $D_i(\alpha + n_ix)$. This is a polynomial whose coefficients are in the field $\mathbb{Q}(\alpha)$ and whose one root is $\beta_i$. Therefore,\footnote{We proved this implication for only $\mathbb{Q}$ before, but the proof immediately generalizes to minimal polynomials over any field.} it is divisible by the minimal polynomial of $\beta_i$ (in $\mathbb{Q}(\alpha)$), that is, by $E_i$. Thus, there exists $F_i \in \mathbb{Q}(\alpha)[x]$ for which $D_i(\alpha + n_ix) = E_i(x)F_i(x)$. Let $D_*(x) := \prod_{i = 1}^t D_i(\alpha + n_ix)$, and let $F \in \mathbb{Q}(\alpha)[x]$ be the product of $F_i$. We now have $D_*(x) = B(x)F(x)$. 

Discard those finitely many primes $p$ for which the coefficients of $F(x)$ do not all belong to $\mathbb{Z}^p(\alpha)$. (Note that $F$ does not depend on $p$.) We now have
\begin{align*}
0 \equiv B(b) \equiv \phi(B(b)) \equiv \phi(B(b))\phi(F(b)) \equiv \phi(B(b)F(b)) \equiv \phi(D_*(b)) \\
\equiv  \prod_{i = 1}^t \phi(D_i(\alpha + n_ib)) \equiv  \prod_{i = 1}^t D_i(a + n_ib) \pmod{p}.
\end{align*}
Thus, we have $p \in S(D_i)$ for some $D_i$, and therefore $p \in S(D)$. As we have discarded only finitely many primes along the way, this proves the lemma.
\end{proof}

The next lemma finishes the proof for irreducible $A$ and $B$. As stated before, the idea of the proof is not ours but may be found in \cite{GB}.

\begin{lemma}
\label{lem:DsubAB}
Let $A, B$ and $D$ be as above. For all except finitely many $p$ for which $p \in S(D)$ we have $p \in S(A) \cap S(B)$.
\end{lemma}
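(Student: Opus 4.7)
The plan is to mirror the argument already sketched in Section~\ref{sec:motivation}, handling the factors of $B$ over $\mathbb{Q}(\alpha)$ one at a time. Since $\mathbb{Q}(\alpha + n_i\beta_i) = \mathbb{Q}(\alpha, \beta_i) \supseteq \mathbb{Q}(\alpha)$ and $\mathbb{Q}(\alpha+n_i\beta_i) \supseteq \mathbb{Q}(\beta_i)$, there exist polynomials $A_i^*, B_i^* \in \mathbb{Q}[x]$ such that
$$A_i^*(\alpha + n_i\beta_i) = \alpha \qquad \text{and} \qquad B_i^*(\alpha + n_i\beta_i) = \beta_i.$$
Then $A(A_i^*(x))$ and $B(B_i^*(x))$ both vanish at $\alpha + n_i\beta_i$, so each is divisible in $\mathbb{Q}[x]$ by the minimal polynomial of $\alpha + n_i\beta_i$ over $\mathbb{Q}$, which is $D_i$ up to a rational constant. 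Hence we may write
$$A(A_i^*(x)) = D_i(x) G_i(x), \qquad B(B_i^*(x)) = D_i(x) H_i(x)$$
for some $G_i, H_i \in \mathbb{Q}[x]$ that do not depend on $p$.

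Next, I would set aside the finite set of exceptional primes: namely, those dividing the denominator of some coefficient of $A_i^*, B_i^*, G_i$ or $H_i$ for some $i$, together with those dividing the leading coefficients of $A$ or $B$. For any prime $p \in S(D)$ outside this finite set, $p \in S(D_i)$ for some $i$, so pick $d \in \mathbb{Z}$ with $D_i(d) \equiv 0 \pmod{p}$. Using Lemmas~\ref{lem:divSum} and~\ref{lem:divProd}, the rational number $A(A_i^*(d)) = D_i(d) G_i(d)$ is divisible by $p$ (because $D_i(d)$ is an integer divisible by $p$ and the denominator of $G_i(d)$ is coprime to $p$). Writing $A_i^*(d) = a/b$ in lowest terms, $p \nmid b$ by our exclusion, so Lemma~\ref{lem:ratRoot} applies and gives $p \in S(A)$. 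The identical argument with $B_i^*$ and $H_i$ in place of $A_i^*$ and $G_i$ gives $p \in S(B)$.

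The main thing to watch is bookkeeping of denominators: although $A_i^*, B_i^*, G_i, H_i$ have rational (not integer) coefficients, only finitely many primes appear in any of these denominators, so it is harmless to throw them away. The underlying algebraic fact — that $D_i$ divides each of $A \circ A_i^*$ and $B \circ B_i^*$ in $\mathbb{Q}[x]$ — is immediate from the minimal-polynomial argument of the preliminaries, and the rest is just transporting the resulting identities modulo $p$ via the divisibility lemmas for rationals. I do not anticipate a serious obstacle beyond packaging these observations cleanly.
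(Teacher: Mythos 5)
Your proposal is correct and follows essentially the same route as the paper: using $A_i^*(\gamma_i)=\alpha$ (and analogously $B_i^*(\gamma_i)=\beta_i$) to factor $A\circ A_i^*$ and $B\circ B_i^*$ through $D_i$, then discarding the finitely many primes meeting denominators or leading coefficients. The paper's version is terser — it leaves the denominator bookkeeping implicit and simply says ``similarly'' for $B$ — but the substance is identical.
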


\begin{proof}
Assume $p \in S(D)$, so $p \in S(D_i)$ for some $i$. Since $\mathbb{Q}(\alpha) \subset \mathbb{Q}(\gamma_i)$, there exists a polynomial $A_* \in \mathbb{Q}[x]$ such that $A_*(\gamma_i) = \alpha$. Since $A(A_*(x))$ has $\gamma_i$ as its root, for some polynomial $F \in \mathbb{Q}[x]$ we have $A(A_*(x)) = D_i(x)F(x)$, as $D_i$ is a scalar multiple of the minimal polynomial of $\gamma_i$ . It follows that for all except finitely many $p$ we have $p \in S(A)$. Similarly, for all except finitely many $p$ we have $p \in S(B)$ if $p \in S(D_i)$. This proves the lemma.
\end{proof}

\subsection{Extending to the general case}

We first prove that one may modify the prime divisors of a polynomial by any finite set of primes.

\begin{lemma}
\label{lem:fix}
Let $P \in \mathbb{Z}[x]$ and $p$ be given. There exists polynomials $P_+$ and $P_-$ in $\mathbb{Z}[x]$ with the following properties: a prime $q \neq p$ is in either all of $S(P), S(P_+)$ and $S(P_-)$ or in none of them, $p \in S(P_+)$ and $p \not\in S(P_-)$.
\end{lemma}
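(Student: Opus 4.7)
The plan is to construct $P_+$ and $P_-$ by explicit substitution tricks: one that forces a root modulo $p$ into the polynomial, and one that destroys all roots modulo $p$, while in both cases preserving the set of roots modulo every other prime $q$ up to a bijection of $\mathbb{Z}/q\mathbb{Z}$ coming from multiplication by (or affine translation by) units mod $q$.

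For $P_-$, I would first dispose of the trivial case $p \notin S(P)$ by setting $P_- = P$. Otherwise, pick any $n_0$ with $p \mid P(n_0)$, let $e = v_p(P(n_0)) \ge 1$, and define
\[
P_-(x) := \frac{P(p^{e+1} x + n_0)}{p^e}.
\]
Expanding $P(p^{e+1}x + n_0) = \sum_i c_i x^i$ by the binomial theorem, one checks that $c_0 = P(n_0)$ has $p$-adic valuation exactly $e$, while every $c_i$ for $i \ge 1$ carries a factor of $p^{i(e+1)}$. Hence $P_- \in \mathbb{Z}[x]$, its constant term $P(n_0)/p^e$ is a $p$-adic unit, and all other coefficients are divisible by $p$. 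This forces $P_-(n) \not\equiv 0 \pmod p$ for every $n$, so $p \notin S(P_-)$. For any prime $q \ne p$ the map $n \mapsto p^{e+1}n + n_0$ is an affine bijection on $\mathbb{Z}/q\mathbb{Z}$ (since $p^{e+1}$ is invertible mod $q$), so $q \in S(P_-) \iff q \in S(P)$.

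For $P_+$, if $p \in S(P)$ already take $P_+ = P$. Otherwise, for $P$ non-constant of degree $d$ with $P(x) = \sum_i a_i x^i$, define
\[
P_+(x) := p^d P(x/p) = \sum_{i=0}^d a_i p^{d-i} x^i.
\]
Since $d - i \ge 0$, this has integer coefficients, and $P_+(0) = a_0 p^d$ is divisible by $p$ (as $d \ge 1$), giving $p \in S(P_+)$. For $q \ne p$, invertibility of $p$ mod $q$ yields $P_+(n) \equiv p^d P(np^{-1}) \pmod q$, and the bijection $n \mapsto np^{-1}$ on $\mathbb{Z}/q\mathbb{Z}$ gives $q \in S(P_+) \iff q \in S(P)$. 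The remaining degenerate cases (non-zero constant $P$, or $P = 0$) are trivially handled: for instance if $P$ is a non-zero constant $a_0$, take $P_+ = p a_0$ and $P_- = a_0/p^{v_p(a_0)}$; and if $P = 0$, take $P_+ = P$ together with $P_-(x) = px + 1$.

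I do not anticipate a serious obstacle. The whole content is bookkeeping, and the only delicate point is verifying the $p$-adic valuations in the $P_-$ construction so that dividing $P(p^{e+1}x + n_0)$ by $p^e$ still produces integer coefficients while pushing the constant term down to a unit. This is essentially the same rescaling trick used at the end of Section~\ref{sec:motivation} to build the polynomial $D_3$, now systematized and paired with its dual construction for $P_+$.
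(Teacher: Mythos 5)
Your proposal is correct and rests on the same idea as the paper's proof: rescale the variable by a suitable power of $p$, divide by a power of $p$, and observe that this leaves the prime divisors at every $q \neq p$ unchanged while forcing or killing a root modulo $p$ (you yourself note the $D_3$ precedent from Section~\ref{sec:motivation}). Two remarks. First, the paper's $P_+$ is much leaner: it simply sets $P_+ = pP$, which trivially has $p \mid P_+(n)$ for every $n$ and leaves $S(P_+) \cap \{q : q \neq p\}$ unchanged because $p$ is a unit modulo $q$; this works for every $P$ at once, with no degree hypothesis and no case split, whereas your $P_+(x) = p^d P(x/p)$ requires $P$ to be non-constant and forces you to carve off the constant and zero cases separately. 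Second, your $P_-$ construction has a small gap: you write ``pick any $n_0$ with $p \mid P(n_0)$, let $e = v_p(P(n_0)) \ge 1$,'' but if the chosen $n_0$ happens to be an exact root of $P$ then $P(n_0) = 0$ and $e$ is infinite. This is easily patched — the set of $n_0$ with $p \mid P(n_0)$ is a nonempty union of residue classes modulo $p$ and hence infinite, while a nonzero $P$ has at most $\deg P$ roots, so one can always pick $n_0$ with $P(n_0) \neq 0$ — but it should be said. The paper sidesteps this by always centering at $n_0 = 0$, taking $P_-(x) = P(p^{k+1}x)/p^k$ with $p^k$ the exact power of $p$ dividing $P(0)$, and handling the degenerate case $P(0) = 0$ separately via $P_-(x) = px+1$ (since then $S(P)$ is all primes). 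Your centering at a general $n_0$ with $p \mid P(n_0)$ is a perfectly good alternative once the finiteness-of-roots point is added.
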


\begin{proof}
For $P_+$ we may simply take $P_+(x) = pP(x)$. For $P_-$ we first note that if $P(0) = 0$, then $S(P)$ contains all primes and we may take $P_-(x) = px + 1$. Otherwise, let $p^k$ be the largest power of $p$ dividing $P(0)$. Now, define
$$P_-(x) = \frac{P(p^{k+1}x)}{p^k}.$$
A direct calculation gives that $P_-$ has integer coefficients, and all coefficients except for the constant term of $P_-$ are divisible by $p$. Therefore, $p \not\in S(P_-)$. It is also easy to check that $q \in S(P_-)$ if and only if $q \in S(P)$, when $q \neq p$ is a prime, from which the claim follows.
\end{proof}

We have thus proven Theorem \ref{thm:intersection} for irreducible $A$ and $B$. We now extend this result to the general case. We first note that we may take unions of sets of the form $S(P)$.

\begin{lemma}
\label{lem:union}
Let $P_1, \ldots , P_n$ be polynomials with integer coefficients. There exists a polynomial $D \in \mathbb{Z}[x]$ such that
$$S(D) = S(P_1) \cup \ldots \cup S(P_n).$$
\end{lemma}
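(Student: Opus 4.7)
The plan is to take $D = P_1 P_2 \cdots P_n$, i.e.\ simply the product of the given polynomials, and verify both inclusions directly from the definition of $S(\cdot)$, using nothing more than the primality of $p$.

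For the inclusion $S(P_1) \cup \ldots \cup S(P_n) \subset S(D)$, I would pick any $p$ in the union, so $p \in S(P_i)$ for some $i$, and choose an integer $a$ with $P_i(a) \equiv 0 \pmod{p}$. Then $D(a) = P_1(a) \cdots P_n(a)$ is a product one of whose factors is divisible by $p$, hence $D(a) \equiv 0 \pmod{p}$, giving $p \in S(D)$. Conversely, if $p \in S(D)$ and $D(a) \equiv 0 \pmod{p}$ for some integer $a$, then because $\mathbb{Z}/p\mathbb{Z}$ is an integral domain, one of the factors $P_i(a)$ must be $\equiv 0 \pmod{p}$, so $p \in S(P_i)$ and hence belongs to the union.

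There is no serious obstacle here; the only thing worth noting is the behavior on degenerate inputs. If some $P_i$ is the zero polynomial, then $D$ is the zero polynomial and both sides equal the set of all primes, so the equality still holds. If some $P_i$ is a nonzero constant, the argument is unchanged since $S(P_i)$ is precisely the set of primes dividing that constant, and divisibility of a product by a prime is equivalent to divisibility of one of its factors. Thus the proof is a one-liner: $D := \prod_{i=1}^n P_i$ works, because $p \mid P_1(a) \cdots P_n(a)$ if and only if $p \mid P_i(a)$ for some $i$.
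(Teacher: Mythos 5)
Your proof is correct and takes exactly the same approach as the paper, which also sets $D = P_1 \cdots P_n$ and relies on the fact that a prime divides a product if and only if it divides one of the factors. You have simply spelled out the verification (and noted the degenerate cases) that the paper leaves implicit.
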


\begin{proof}
Take $D = P_1 \cdots P_n$.
\end{proof}

For reducible $A$ and $B$ write $A = A_1 \cdots A_a$, where $A_i \in \mathbb{Z}[x]$ are irreducible, and similarly $B = B_1 \cdots B_b$. The polynomials $A_i$ and $B_j$ may be chosen to have integer coefficients by Gauss's lemma (see \cite{Lang}, Chapter IV, Section 2). For each $1 \le i \le a$ and $1 \le j \le b$ let $D_{i, j}$ be such that $S(D_{i, j}) = S(A_i) \cap S(B_j)$. By Lemma \ref{lem:union} there exists a polynomial $D$ such that
$$S(D) = \bigcup_{i, j} S(D_{i, j}).$$
We have $p \in S(D)$ if and only if $p \in S(A) \cap S(B)$, which proves Theorem \ref{thm:intersection}.

\begin{remark}
If one defines $m$ to be a divisor of $P$ if $m | P(n)$ for some integer $n$, then one can similarly prove, more strongly, that the common divisors of $A$ and $B$ are the divisors of $D$ for some $D$. By the Chinese remainder theorem this is reduced to handling prime powers, and for prime powers we have the following consequence of the Hensel's lifting lemma and Bezout's lemma for polynomials: if $P \in \mathbb{Z}[x]$ is irreducible, then for all but finitely many $p \in S(p)$ the equation $P(x) \equiv 0 \pmod{p^k}$ has a solution for all $k \ge 1$. The result immediately follows for reducible $P$ as well. One can prove in the spirit of Lemma \ref{lem:fix} that one may alter the highest power of $p$ which is a divisor of $P$ for any fixed number of $p$, except that one cannot set this highest power to be infinite. However, with enough care one can prove that in the weak form we get that any such ``strong'' prime divisor of $A$ and $B$ (i.e. a prime $p$ such that $p^k$ is a divisor of $A$ and $B$ for any $k$) is a strong prime divisor of $D$ too. (One may in the irreducible case handle only monic $A$ and $B$, and then apply a suitable transformation of the form $P(x) \to c^{\deg(P) - 1}P(\frac{x}{c})$ to reduce the general case to the monic case.)
\end{remark}

\section{Proof of Theorem \ref{thm:system}}

Ax (\cite{Ax}, Theorem 1) has proven the following statement. Let $F_1, F_2, \ldots , F_m$ be polynomials in the variables $x_1, \ldots , x_n$. The set of primes $p$ for which the system
\[
\begin{cases}
F_1(x_1, \ldots , x_n) \equiv 0 \pmod{p} \\
F_2(x_1, \ldots , x_n) \equiv 0 \pmod{p} \\
\vdots \\
F_m(x_1, \ldots , x_n) \equiv 0 \pmod{p} \\
\end{cases}
\]
has a solution may be expressed as a finite combination of unions, intersections and complements of sets of the form $S(P), P \in \mathbb{Z}[x]$.

In \cite{Dries} the result of Ax is refined by Dries as follows: one may drop the words ``unions'' and ``complements'' above, i.e. the set of $p$ for which the system is solvable is a finite intersection of sets of the form $S(P)$. By Theorem \ref{thm:intersection} this implies Theorem \ref{thm:system}.\footnote{It seems that the proof of Ax uses complemenets merely for modifying finite sets of primes in the sets of the form $S(P)$. By Lemma \ref{lem:fix} this can be done without using complements, and by Lemma \ref{lem:union} one can take unions of sets of the form $S(P)$. Thus, it would seem that this gives a different way to obtain the result of Dries than the one presented in \cite{Dries}.}

We think that the reader may benefit from an explanation \textit{why} the theorem should be true. We give a very high-level overview of the ideas.

Intuitively, if in Theorem \ref{thm:system} the number of equations $m$ is (much) larger than the number of variables $n$, we have too many constraints  and the system should have no solutions, at least in generic cases. For example, consider the case $m = 2$, $n = 1$. If $F_1, F_2 \in \mathbb{Z}[x_1]$ are coprime (which is the generic case), one may apply Bezout's lemma for polynomials to deduce that the system is solvable for only finitely many $p$. 

On the other hand, if $m$ is (much) smaller than $n$, we have quite a lot of freedom in the system, and intuitively there should be at least one solution to the system. Consider, for example, the case $m = 1, n = 2$. Turns out that a generic two-variable polynomial has roots modulo $p$ for all but finitely many primes $p$ (see the Lang-Weil bounds mentioned below). However, there are some examples of two-variable polynomials which do not have this property, such as $(xy)^2 + 1$.

The idea of a ``generic'' system of equation and its ``degrees of freedom'' is made precise via varieties. For each system $F$ of polynomial equations we define the so-called variety $V_F$ corresponding to the set of solutions of $F$. In this case, we are interested in varieties defined over the field of integers modulo $p$. If $V_F$ is irreducible (i.e. can't be expressed as a proper union of two varieties) and of dimension at least $1$ (one can think of this as $F$ having infinitely many complex solutions), the celebrated Lang-Weil bound gives that for all except finitely many primes $p$ the system $F$ has a solution modulo $p$. In general, the Lang-Weil bound gives a good approximation for the number of solutions of a system of polynomial congruneces. See \cite{Tao} for an accessible and more comprehensive exposition.

In the case $V_F$ has dimension $0$ (so there are only finitely many complex solutions to $F$), the idea is that these finitely many solutions of $F$ over $\mathbb{C}$ correspond to some finite vectors of algebraic numbers. Whether or not these algebraic numbers correspond to integers modulo $p$ is determined by whether or not $p$ is a prime divisor of a certain finite set of polynomials. See also Proposition 2.7 in \cite{Dries}.

Combining these observations, it sounds believable that Theorem \ref{thm:system} holds. Indeed, we have above proven that if the variety $V_F$ is irreducible (when considered over $\mathbb{C}$), the conclusion of Theorem \ref{thm:system} is true. Reducing the reducible case to the irreducible one is by no means trivial. We refer the reader to Ax's work \cite{Ax}.

\section{The Frobenius density theorem}

Before moving to the proofs of the remaining theorems we introduce the Frobenius density theorem.

Let $P \in \mathbb{Z}[x]$ be a non-constant polynomial whose roots are $\alpha_1, \ldots , \alpha_n$. The splitting field $F_P$ of $P$ is defined to be
$$\mathbb{Q}(\alpha_1, \ldots , \alpha_n).$$

We look at the isomorphisms $\sigma$ sending $F_P$ to itself. An isomorphism is an additive, multiplicative and bijective map sending $1$ to $1$. In the case $P(x) = x^2 - 2$ we have $F_P = \mathbb{Q}(\sqrt{2})$, and there are two such maps $\sigma$: the first is the identity sending $a + b\sqrt{2}$ to $a + b\sqrt{2}$ for all $a, b \in \mathbb{Q}$, and the second is the conjugation $a + b\sqrt{2} \to a - b\sqrt{2}$.

In the general case, a direct calculation gives that the roots of $P$ are mapped to each other. Indeed, if we write $P(x) = a_nx^n + \ldots + a_0$, we have for any $i$
$$P(\sigma(\alpha_i)) =  a_n\sigma(\alpha_i)^n + \ldots + a_1\sigma(\alpha_i) + a_0 = \sigma(a_n\alpha_i^n + \ldots + a_1\alpha_i + a_0) = \sigma(P(\alpha_i)) = \sigma(0) = 0,$$
so $\sigma(\alpha_i) = \alpha_j$ for some $j$. Since $\sigma$ is a bijection, it permutes the roots of $\alpha_i$. Furthermore, if we know $\sigma(\alpha_1), \ldots , \sigma(\alpha_n)$, then we know the values of $\sigma$ for all $x \in F_P$.

The set of these isomorphisms $\sigma$ is called the Galois group of $F_P$ (over $\mathbb{Q}$), which is denoted by $\Gal(F_P/\mathbb{Q})$. This is a group with respect to composition. For example, if $\sigma_1, \sigma_2 \in \Gal(F_P/\mathbb{Q})$, then the map $\sigma_3$ defined by $\sigma_3(x) = \sigma_1(\sigma_2(x))$ is also an isomorphism of $F_P$ to itself, and $\sigma_3$ is therefore in the Galois group.

Each $\sigma \in \Gal(F_P/\mathbb{Q})$ corresponds to a permutation, and we associate the permutation's cycle type to $\sigma$. The Frobenius density theorem connects these cycle types to the factorization of $P$ modulo $p$ for primes $p$. The statement is as follows. 

\textbf{Frobenius density theorem.} \textit{Let $P \in \mathbb{Z}[x]$ be non-constant and let $F_P$ be the splitting field of $P$. Let $a_1, \ldots , a_k$ be positive integers with sum $\deg(P)$. Let $N$ be the number of elements in $\Gal(F_P/\mathbb{Q})$ with cycle type $a_1, \ldots , a_k$, and let $S$ be the set of primes $p$ for which $P$ factorizes as the product of $k$ irreducible polynomials modulo $p$ with degrees $a_1, \ldots , a_k$. The density $\delta(S)$ of $S$ exists, and we have
$$\delta(S) = \frac{N}{|\Gal(F_P/\mathbb{Q})|}.$$}

For more discussion on the theorem and its proof, see \cite{SL}.

We are interested in applying the theorem in the case when some $a_i$ equals $1$, which corresponds to an element $\sigma$ fixing some root of $P$, as this gives the density of the prime divisors of $P$. 

As the proof of the Frobenius density theorem is not easy, we try to convince the reader of the fact that $S(P)$ has a positive density for non-constant $P$. Consider a prime $p$ such that $P$ is not constant modulo $p$. (This discards only finitely many $p$.) Now the equation $P(x) \equiv r \pmod{p}$ has at most $\deg(P)$ solutions for any $r$. Therefore, $P$ attains at least $p/\deg(P)$ values modulo $p$. Heuristically, the value $0$ should not be an especially unlikely value. Thus, one would predict that $\delta(S(P))$ is positive. In fact, the lower bound $\delta(S(P)) \ge \frac{1}{\deg(P)}$ suggested by this heuristic is true as well (\cite{BL}, Lemma 3), the result being a consequence of the Frobenius density theorem.

\section{Proof of Theorem \ref{thm:bound}}

As noted in the last section, the lower bound $\delta(S(P)) \ge \frac{1}{\deg(P)}$ for one polynomial follows from the Frobenius density theorem (\cite{BL}, Lemma 3). The lower bound for the general case follows by proving that the degree of the $D$ constructed in the proof of Theorem \ref{thm:intersection} equals $\deg(P_1) \cdots \deg(P_n)$. We consider the case $n = 2$ only, as the general case follows by induction.

Consider first the case when $A = P_1$ and $B = P_2$ are irreducible. Then, using the notation of the proof of Theorem \ref{thm:intersection}, we have
$$\deg(D) = \sum_{i = 1}^t \deg(D_i) = \sum_{i = 1}^t [\mathbb{Q}(\alpha, \beta_i) : \mathbb{Q}].$$
By using the fundamental multiplicative property
$$[\mathbb{Q}(\alpha, \beta_i) : \mathbb{Q}] = [\mathbb{Q}(\alpha, \beta_i) : \mathbb{Q}(\alpha)][\mathbb{Q}(\alpha) : \mathbb{Q}]$$
of the degrees of field extensions (\cite{Lang}, Chapter V, Proposition 1.2) we may simplify the above as
$$\deg(D) = \sum_{i = 1}^t \deg(A)\deg(E_i) = \deg(A)\deg(B).$$

Consider then the case of reducible $A$ and $B$. Let $A_1, \ldots , A_a$, $B_1, \ldots , B_b$ and $D_{i, j}$ be as in the proof. Now
\begin{align*}
\deg(D) = \deg\left(\prod_{i = 1}^{a} \prod_{j = 1}^{b} D_{i, j}\right) = \sum_{i = 1}^{a} \sum_{j = 1}^b \deg(A_i)\deg(B_j) \\
= \left(\sum_{i = 1}^a \deg(A_i)\right)\left(\sum_{j = 1}^b \deg(B_j)\right) = \deg(A)\deg(B).
\end{align*}

Here is an example of an equality case for the inequality of Theorem \ref{thm:bound}. Choose $P_i$ to be equal to the $k_i$th cyclotomic polynomial, where $k_1, \ldots , k_n$ are coprime integers. Now if $p \in S(P_i)$, then by a well-known property of the cyclotomic polynomials\footnote{The converse is true as well: if $p \equiv 1 \pmod{k_i}$, then $p$ is a prime divisor of $P_i$.  See for example \cite{MT}, Corollary 1 and Theorem 5.} we have either $p \equiv 1 \pmod{k_i}$ or $p \mid k_i$ (see for example \cite{MT}, Corollary 1), so by a quantitative version of Dirichlet's theorem
$$\delta\left(S(P_1) \cap \ldots \cap S(P_n)\right) \le \frac{1}{\phi(k_1 \cdots k_n)} = \frac{1}{\deg(P_1) \cdots \deg(P_n)}.$$
We remark that non-cyclotomic examples exist too, such as $n = 2, P_1(x) = x^3 - 2, P_2(x) = x^2 + x + 1$.

Note that this also proves that the construction of Theorem \ref{thm:intersection} is optimal (in the sense of the degree of $D$) in some cases: if $P_i$ are as above and $D$ is chosen such that $S(D) = S(P_1) \cap \ldots \cap S(P_n)$, then by the bound $\delta(S(D)) \ge \frac{1}{\deg(D)}$ we must have $\deg(D) \ge \deg(P_1) \cdots \deg(P_n)$.

\section{Proof of Theorem \ref{thm:product}}

The compositum field $F$ of Theorem \ref{thm:product} is defined to be the smallest extension of $\mathbb{Q}$ containing all of $F_i$. In this case $F$ is the splitting field of the product $P_1 \cdots P_n$. The condition of Theorem \ref{thm:product} corresponds to a certain kind of independence of the polynomials $P_i$.

Let $G = \Gal(F/\mathbb{Q})$ and $G_i = \Gal(F_i/\mathbb{Q})$. Consider the map $G \to G_1 \times G_2 \times \ldots \times G_n$ defined by
$$\sigma \to (\sigma | F_1, \ldots , \sigma | F_n).$$
Here $\sigma | F_i$ denotes the restriction of $\sigma \in G$ to the field $F_i$, so $\sigma | F_i$ is the map defined on $F_i$ which agrees with $\sigma$. This map is injective (\cite{Lang}, Chapter VI, Theorem 1.14).\footnote{In the reference the result is stated for $n = 2$ only, but the same proof works in the general case.} Since by the assumption\footnote{Note that the size of the Galois group equals the degree of the extension (\cite{Lang}, Chapter VI, Theorem 1.8)} we have $|G| = |G_1| \cdots |G_n|$, this map must be a bijection. This proves that $G$ is isomorphic to the product $G_1 \times G_2 \times \cdots \times G_n$. Now an application of the Frobenius density theorem yields the equality of the theorem, as the number of elements of $G$ fixing at least one root of each $P_i$ is the product of the corresponding quantities for the groups $G_i$, and the size of $G$ is equal to the product of the sizes of $G_i$.

\section{Proof of Theorem \ref{thm:surjective}}

Before the main proof we prove that there are many kinds of polynomials $P$ with $\delta(S(P)) = \frac{1}{n}$ for a fixed $n \ge 2$. This is done by the following three lemmas.

\begin{lemma}
\label{lem:subgroup}
Let $k \in \mathbb{Z_+}$. Let $S$ be the set of those $n$ for which $1 \le n \le k$ and for which $n$ and $k$ are coprime. Let $H$ be some subset of $S$ which is closed under multiplication, when multiplication is considered modulo $k$. There exists a polynomial $P \in \mathbb{Z}[x]$ with the following properties:

\begin{enumerate}
\item $p \in S(P)$ if and only if $p \equiv h \pmod{k}$ for some $h \in H$.
\item The splitting field $F_P$ of $P$ is a subfield of $\mathbb{Q}(\zeta_k)$, where $\zeta_k$ is a primitive $k$th root of unity.
\end{enumerate}

\end{lemma}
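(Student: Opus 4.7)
The plan is Galois-theoretic. Identify $\Gal(\mathbb{Q}(\zeta_k)/\mathbb{Q})$ with $G := (\mathbb{Z}/k\mathbb{Z})^*$ via $a \mapsto \sigma_a$, where $\sigma_a(\zeta_k) = \zeta_k^a$. First I would observe that $H$ is automatically a subgroup of $G$: since $G$ is finite, for any $h \in H$ the power $h^{\phi(k)}$ equals $1$ and lies in $H$, and then $h^{\phi(k)-1} = h^{-1}$ also lies in $H$. (If $H = \emptyset$, simply take $P = 1$.) Let $L = \mathbb{Q}(\zeta_k)^H$ be the fixed field. Because $G$ is abelian, $H$ is normal, so $L/\mathbb{Q}$ is Galois with group $G/H$. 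Let $\alpha$ be a primitive element of $L/\mathbb{Q}$, which exists by the primitive element theorem proved in Section 3, and let $P_0 \in \mathbb{Z}[x]$ be an integer multiple of its minimal polynomial. Since $L/\mathbb{Q}$ is Galois, every $\mathbb{Q}$-conjugate of $\alpha$ lies in $L$, so the splitting field of $P_0$ equals $L \subseteq \mathbb{Q}(\zeta_k)$. This yields property (2).

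For property (1), the aim is to prove the biconditional $p \in S(P_0) \iff p \bmod k \in H$ for all but finitely many primes $p$, and then to add or remove the remaining exceptional primes via Lemma \ref{lem:fix}. The operations there, $P \mapsto pP$ and $P \mapsto P(p^{k+1}x)/p^k$, only rescale the roots of $P$, so the splitting field is preserved and property (2) survives. For a prime $p$ not dividing $k$, the leading coefficient of $P_0$, or its discriminant, the Galois-ness of $L/\mathbb{Q}$ ensures that all irreducible factors of $P_0 \bmod p$ share a common degree equal to the order of the Frobenius at $p$. Thus $P_0$ has a root modulo $p$ iff $P_0$ splits completely modulo $p$, iff the Frobenius at $p$ is the identity in $\Gal(L/\mathbb{Q}) = G/H$. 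For the cyclotomic extension $\mathbb{Q}(\zeta_k)/\mathbb{Q}$, the Frobenius at $p$ is precisely $\sigma_p$, so its image in $G/H$ is trivial iff $p \bmod k \in H$.

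The main obstacle is justifying the cyclotomic Frobenius identification, which is standard in algebraic number theory but has not been formally developed in the paper. A self-contained route: for $p \nmid k$, the factorization $\Phi_k \equiv \prod_i g_i \pmod{p}$ has all $g_i$ of equal degree $f$, where $f$ is the multiplicative order of $p$ modulo $k$, because the roots of $\Phi_k$ in an algebraic closure of $\mathbb{F}_p$ form a single orbit under the Frobenius $x \mapsto x^p$ acting on the primitive $k$-th roots of unity. This pins down the residue degree of $p$ in $\mathbb{Q}(\zeta_k)$ and hence its decomposition in the intermediate field $L$ through the Galois correspondence; alternatively one can appeal to standard references on cyclotomic fields such as Lang.
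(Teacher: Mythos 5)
Your overall strategy --- take $L$ to be the fixed field of $H$ inside $\mathbb{Q}(\zeta_k)$, let $P_0$ be (a rescaled) minimal polynomial of a primitive element of $L$, establish the biconditional for all but finitely many primes, and then patch the exceptional primes with Lemma \ref{lem:fix} --- is precisely Schur's construction that the paper invokes by citing Theorems 4 and 5 of \cite{MT}. So at the level of structure you and the paper coincide; the difference is that the paper simply defers the biconditional to the reference, while you attempt to supply the argument. Your observation that $H$ is automatically a subgroup (and the degenerate empty case) and your check that the operations of Lemma \ref{lem:fix} preserve the splitting field are correct and are worth stating explicitly, since the paper glosses over them.

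The Frobenius-element argument you give for property (1) is morally right, but it relies on machinery the paper has not developed: you need that for $p$ unramified in $L$ and not dividing the index, the degrees of the irreducible factors of $P_0 \bmod p$ equal the residue degrees of the primes above $p$ (Dedekind--Kummer), and that in a Galois extension all those residue degrees coincide, equal to the order of the Frobenius. You flag this yourself, but the ``self-contained route'' you sketch does not close the gap: computing the factorization type of $\Phi_k \bmod p$ tells you the residue degree of $p$ in $\mathbb{Q}(\zeta_k)$, yet passing from there to the factorization type of $P_0 \bmod p$ for the intermediate field $L$ still uses decomposition-group/Dedekind--Kummer reasoning. The Murty--Thain proof avoids this by taking the primitive element $\alpha$ concretely to be the Gaussian period $\sum_{h \in H} \zeta_k^h$, writing its conjugates $\alpha_t = \sum_{h\in H}\zeta_k^{th}$ over coset representatives $t$, and computing directly that $\alpha_t^p \equiv \alpha_{tp} \pmod{p}$, so that $\alpha_t \bmod p$ is fixed by the $p$-power map iff $p \bmod k \in H$. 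That is a genuinely more elementary path, which is presumably why the paper refers to it rather than to Frobenius elements.

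One small slip: the roots of $\Phi_k$ in $\overline{\mathbb{F}_p}$ do \emph{not} form a single orbit under $x \mapsto x^p$. There are $\phi(k)/f$ orbits, each of size $f$ where $f$ is the multiplicative order of $p$ modulo $k$; it is the equal size of the orbits (not their uniqueness) that makes all irreducible factors of $\Phi_k \bmod p$ have degree $f$.
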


\begin{proof}
The result was first proven by Schur, but since the original work is difficult to find, we refer to the (modern) exposition given by Ram Murty and Nithum Thain in \cite{MT}. The first property of the lemma follows from their Theorems 4 and 5, and the second property follows from the construction. Note that in Theorem 4 of \cite{MT} it is stated that there may be a finite number of expectations, but these can be handled by Lemma \ref{lem:fix}.
\end{proof}

\begin{lemma}
\label{lem:1/n}
Let $n \ge 2$ be an integer. There exists infinitely many primes $p$ for which there exists a $P \in \mathbb{Z}[x]$ satisfying the following conditions:

\begin{enumerate}
\item $\delta(S(P)) = \frac{1}{n}$.
\item The splitting field $F_P$ of $P$ is a subfield of $\mathbb{Q}(\zeta_p)$.
\end{enumerate}

\end{lemma}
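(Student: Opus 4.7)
The plan is to apply Lemma \ref{lem:subgroup} with $k$ taken to be a prime. For any prime $p$ with $n \mid p-1$, the multiplicative group $(\mathbb{Z}/p\mathbb{Z})^*$ is cyclic of order $p-1$, and since $(p-1)/n$ divides $p-1$, it has a unique (hence well-defined) subgroup $H$ of order $(p-1)/n$. This $H$ is tautologically closed under multiplication modulo $p$, so Lemma \ref{lem:subgroup} applies with $k = p$, producing a polynomial $P \in \mathbb{Z}[x]$ whose prime divisors are exactly the primes $q$ with $q \bmod p \in H$, and whose splitting field $F_P$ sits inside $\mathbb{Q}(\zeta_p)$, giving property (2).

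For property (1), the quantitative form of Dirichlet's theorem on primes in arithmetic progressions gives that each residue class in $(\mathbb{Z}/p\mathbb{Z})^*$ contains a set of primes of natural density $1/\phi(p) = 1/(p-1)$. Summing over the $|H| = (p-1)/n$ residue classes in $H$ yields
\[
\delta(S(P)) = \frac{|H|}{p-1} = \frac{1}{n},
\]
as required.

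Finally, to produce infinitely many primes $p$, we invoke Dirichlet's theorem one more time: there are infinitely many primes $p \equiv 1 \pmod{n}$, and each such $p$ satisfies $n \mid p-1$, so the construction above can be carried out for each of them.

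There is no real obstacle here; the argument is essentially a packaging of Lemma \ref{lem:subgroup} with the cyclicity of $(\mathbb{Z}/p\mathbb{Z})^*$ and Dirichlet's theorem. The only point worth checking carefully is that the subgroup $H$ exists as a subset of $\{1, 2, \ldots, p-1\}$ closed under multiplication modulo $p$ (which is immediate from the cyclic structure), and that the finitely many exceptional primes mentioned in Lemma \ref{lem:subgroup} do not affect the density calculation (they are a finite set and hence contribute $0$ to the natural density).
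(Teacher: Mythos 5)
Your argument is exactly the one the paper gives: choose a prime $p \equiv 1 \pmod{n}$ via Dirichlet, take $H$ to be the unique subgroup of $(\mathbb{Z}/p\mathbb{Z})^*$ of order $(p-1)/n$, apply Lemma \ref{lem:subgroup} with $k = p$, and read off the density $\frac{|H|}{p-1} = \frac{1}{n}$ from the quantitative form of Dirichlet's theorem. No substantive difference from the paper's proof.
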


\begin{proof}
By Dirichlet's theorem there exists infinitely many primes $p$ such that $p \equiv 1 \pmod{n}$. Pick such a $p$ and write $p = mn + 1$, $m \in \mathbb{Z}$. Let $H$ be the unique subgroup of $(\mathbb{Z}/p\mathbb{Z})^*$ (the non-zero elements modulo $p$) of size $m$. Let $P$ be as in Lemma \ref{lem:subgroup}, so $p \in S(P)$ if and only if $p \equiv h \pmod{p}$ for some $h \in H$. By Dirichlet's theorem we have $\delta(S(P)) = \frac{|H|}{p-1} = \frac{m}{p-1} = \frac{1}{n}$. Thus, property 1 is satisfied for this $P$. Property 2 follows from the choice of $P$ and Lemma \ref{lem:subgroup}.
\end{proof}

The following lemma provides a way to ``merge'' to a polynomial $P$ a polynomial $Q$ with $\delta(S(Q)) = \frac{1}{n}$ such that $P$ and $Q$ are independent in the sense of Theorem \ref{thm:product}.

\begin{lemma}
\label{lem:combine}
Let $P \in \mathbb{Z}[x]$ and $n \ge 2$ be arbitrary. There exists a $Q \in \mathbb{Z}[x]$ which satisfies the following conditions:

\begin{enumerate}
\item $\delta(S(Q)) = \frac{1}{n}.$
\item $F_P \cap F_Q = \mathbb{Q}$, where $F_R$ denotes the splitting field of $R$.
\end{enumerate}

\end{lemma}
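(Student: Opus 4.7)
The plan is to apply Lemma \ref{lem:1/n} to $n$, obtaining, for any prime $p$ in an infinite set, a polynomial $Q$ with $\delta(S(Q)) = 1/n$ and $F_Q \subset \mathbb{Q}(\zeta_p)$. Condition (1) of the lemma is then immediate; for condition (2) I will strengthen it to $F_P \cap \mathbb{Q}(\zeta_p) = \mathbb{Q}$ by a careful choice of $p$, since together with $F_Q \subset \mathbb{Q}(\zeta_p)$ this forces $F_P \cap F_Q \subset F_P \cap \mathbb{Q}(\zeta_p) = \mathbb{Q}$.

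The key step will be to show that for all but finitely many primes $p$, one has $F_P \cap \mathbb{Q}(\zeta_p) = \mathbb{Q}$. The natural approach is through ramification: let $L = F_P \cap \mathbb{Q}(\zeta_p)$. As a subfield of $\mathbb{Q}(\zeta_p)$, $L$ can only be ramified at $p$; as a subfield of $F_P$, $L$ is unramified wherever $F_P$ is. Since $F_P$ has only finitely many ramified primes, choosing $p$ outside this finite set makes $L$ unramified at every rational prime, and Minkowski's theorem (any number field different from $\mathbb{Q}$ must ramify somewhere) then forces $L = \mathbb{Q}$. Since Lemma \ref{lem:1/n} supplies infinitely many admissible $p$, I can pick one unramified in $F_P$ and take the corresponding $Q$.

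The main obstacle is this ramification claim, which pulls in algebraic-number-theoretic input just beyond the preliminaries collected in Section 3. If one wishes to avoid invoking Minkowski, a more hands-on variant is to enumerate the finitely many non-trivial subfields $K \subset F_P$ and argue field by field: only abelian $K$ can embed into $\mathbb{Q}(\zeta_p)$, and by Kronecker--Weber each such $K$ has a unique conductor $N_K$, so $K \subset \mathbb{Q}(\zeta_p)$ would force $p = N_K$, which excludes only finitely many primes in total. With either packaging of the claim in hand, Lemma \ref{lem:1/n} produces the required $Q$, completing the proof.
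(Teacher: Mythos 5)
Your proof is correct, but takes a genuinely different route from the paper's. You and the paper both begin with Lemma~\ref{lem:1/n} and both reduce the problem to showing $F_P \cap \mathbb{Q}(\zeta_p) = \mathbb{Q}$ for some admissible $p$. Where you diverge is in establishing this. Your argument is local and quantitative: you invoke ramification theory (a subfield of $\mathbb{Q}(\zeta_p)$ can ramify only at $p$, while a subfield of $F_P$ can ramify only at the finitely many primes ramifying in $F_P$), and then appeal to Minkowski's discriminant theorem to conclude that an everywhere-unramified number field is $\mathbb{Q}$; your alternative via Kronecker--Weber and conductors is in the same spirit. This proves the stronger statement that $F_P \cap \mathbb{Q}(\zeta_p) = \mathbb{Q}$ for all but finitely many primes $p$, which is more than needed. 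The paper instead argues by pigeonhole and contradiction: since $F_P$ has only finitely many subfields (fundamental theorem of Galois theory), a failure for infinitely many $p$ would force two distinct primes $p_1, p_2$ to produce the same non-trivial intersection $K \subset \mathbb{Q}(\zeta_{p_1}) \cap \mathbb{Q}(\zeta_{p_2})$, contradicting the standard fact that cyclotomic fields of coprime moduli intersect trivially. The paper's argument is leaner in prerequisites -- it stays within the Galois-theoretic facts already cited elsewhere in the text -- whereas yours pulls in ramification and Minkowski (or Kronecker--Weber), machinery not otherwise introduced in this article. Both arguments are sound; yours buys a sharper ``all but finitely many'' conclusion at the cost of heavier tools, while the paper's existence-by-contradiction matches the lighter ambient toolkit.
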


\begin{proof}
By Lemma \ref{lem:1/n} there exists arbitrarily large primes $p$ and polynomials $Q_p$ for which $\delta(S(Q_p)) = \frac{1}{n}$ and $F_{Q_p} \subset \mathbb{Q}(\zeta_p)$. It suffices to prove that at least one of these $Q_p$ is such that its splitting field does not intersect the splitting field of $P$ non-trivially. Assume the contrary. Thus, for infinitely many primes $p$ we have $F_P \cap \mathbb{Q}(\zeta_p) \neq \mathbb{Q}$.  

The fundamental theorem of Galois theory tells that there is a correspondence between the subgroups of $\Gal(F_P/\mathbb{Q})$ and subfields of $F_P$. The theorem tells that, in particular, the number of subfields of $F_P$ is finite (\cite{Lang}, Chapter VI, Corollary 1.6). Therefore, there exists two distinct primes $p_1, p_2$ for which $F_P \cap \mathbb{Q}(\zeta_{p_1}) = F_P \cap \mathbb{Q}(\zeta_{p_2}) = K \neq \mathbb{Q}$. This is a contradiction, as we have $\mathbb{Q}(\zeta_m) \cap \mathbb{Q}(\zeta_n) = \mathbb{Q}$ for any coprime integers $m, n$ (\cite{Lang}, Chapter VI, Corollary 3.2).
\end{proof}

We now prove by induction with respect to $k$ that for all $\frac{m}{k}$ there exists a polynomial $P$ with $\delta(S(P)) = \frac{m}{k}$, when $0 \le m \le k$, $m, k \in \mathbb{Z}$, $k \ge 1$. The case $k = 1$ is clear, so assume the statement holds for $k = n-1$, and prove it for $k = n \ge 2$. Thus, we want to prove that for all $0 \le m \le n$ there exists a polynomial for which its density of prime divisors is $\frac{m}{n}$. This is clear for $m = 0$, so assume $m > 0$. By the induction hypothesis there exists a polynomial $P$ for which $\delta(S(P)) = \frac{m-1}{n-1}$.

Let $Q$ be as in Lemma \ref{lem:combine} so that $\delta(S(Q)) = \frac{1}{n}$ and $F_P \cap F_Q = \mathbb{Q}$. The latter property is equivalent to the condition of Theorem \ref{thm:product} (see \cite{Lang}, Chapter VI, Theorem 1.14 and the proof of Theorem \ref{thm:product}), so we have $\delta(S(P) \cap S(Q)) = \delta(S(P))\delta(S(Q)) = \frac{m-1}{n(n-1)}$. A direct calculation now gives
$$\delta(S(PQ)) = \delta(S(P) \cup S(Q)) = \delta(S(P)) + \delta(S(Q)) - \delta(S(P) \cap S(Q)) = \frac{m}{n}.$$
This completes the induction, thus proving Theorem \ref{thm:surjective}.

\end{document}